\newtheorem{theorem}{Theorem}[section]
\newtheorem{lemma}[theorem]{Lemma}
\newtheorem{proposition}[theorem]{Proposition}
\newtheorem{claim}[theorem]{Claim}
\newcommand{\N}{\mathbb N}
\newcommand{\Q}{\mathbb Q}
\newcommand{\R}{\mathbb R}
\newcommand{\C}{\mathbb C}
\newcommand{\on}{\operatorname}
\author{Artur Bartoszewicz}
\address{Institute of Mathematics, \L\'od\'z University of Technology, W\'olcza\'nska 215, 93-005
\L\'od\'z, Poland}
\email {arturbar@p.lodz.pl}
\author{Szymon G\l \c ab}
\address{Institute of Mathematics, \L\'od\'z University of Technology, W\'olcza\'nska 215, 93-005
\L\'od\'z, Poland}
\email {szymon.glab@p.lodz.pl}
\author{Adam Paszkiewicz}
\address{Faculty of Mathematics and Computer Science, University of \L\'od\'z, Banacha 22,
90-238 \L\'od\'z, Poland}
\email {ktpis@math.uni.lodz.pl}
\thanks{The first two authors have been supported by the Polish Ministry of Science and Higher Education Grant No.  N N201 414939 (2010--2013).}
\title[Large free linear algebras of real and complex functions]{Large free linear algebras of real and complex functions}
\subjclass[2010]{Primary: 15A03; Secondary: 28A20, 26A15} 
\keywords{strong algebrability, perfectly everywhere surjective functions, strongly everywhere surjective functions, set of continuity points}
\date{}
\begin{document}

\begin{abstract}
Let $X$ be a set of cardinality $\kappa$ such that $\kappa^\omega=\kappa$. We prove that the linear algebra $\R^X$ (or $\C^X$) contains a free linear algebra with $2^\kappa$ generators. Using this, we prove several algebrability results for spaces $\C^\C$ and $\R^\R$. In particular, we show that the set of all perfectly everywhere surjective functions $f:\C\to\C$ is strongly $2^\mathfrak{c}$-algebrable. We also show that the set of all functions $f:\R\to\R$ whose sets of continuity points equals some fixed $G_\delta$ set $G$ is strongly $2^\mathfrak{c}$-algebrable if and only if $\R\setminus G$ is $\mathfrak{c}$-dense in itself. 
\end{abstract}

\maketitle

\section{Introduction}

Let $A$ be a linear algebra and let $E\subset A$. It is natural to ask whether $E\cup\{0\}$ contains an infinitely generated linear subalgebra $A'$ of $A$. If it is so, we say that $E$ is algebrable. There are numerous papers which are devoted to the algebrability of sets $E$ which are far from being linear, that is $x,y\in E$ does not in general imply $x+y\in E$. 

For a cardinal $\kappa$, a subset $E$ of a linear algebra $A$ is called $\kappa$-algebrable whenever $E \cup \{0\}$ contains a $\kappa$-generated linear algebra. If $E$ is $\omega$-algebrable, it is simply said to be algebrable. Let us observe that the set $E$ is $\kappa$-algebrable for $\kappa>\omega$ if and only if it contains an algebra which is a $\kappa$-dimensional linear space (see \cite{BG}). Additionally, we say that a subset $E$ of a commutative linear algebra $A$ is strongly $\kappa$-algebrable \cite{BG} if there exists a $\kappa$-generated free algebra $A'$ contained in $E \cup \{0\}$. Note, that $X= \{x_\alpha : \alpha < \kappa \} \subset E$ is a set of free generators of a free algebra $A' \subset E\cup\{0\}$ if and only if the set $X'$ of elements of the form $x_{\alpha_1}^{k_1} x_{\alpha_2}^{k_2} ... x_{\alpha_n}^{k_n}$ is linearly independent and all linear combinations of elements from $X'$ are in $E \cup \{0 \}$. It is easy to see that free algebras have no divisors of zero.

In practice, to prove the $\kappa$-algebrability of a set $E \subset A$, we have to find a linearly independent set $X \subset E$ of cardinality $\kappa$ such that for any polynomial $P$ in $n$ variables without constant term and any distinct $x_1,...,x_n \in X$ we have either $P(x_1,...,x_n) \in E$ or $P(x_1,...,x_n)=0$. To prove the strong $\kappa$-algebrability of $E$ we have to find $X \subset E$, $|X|= \kappa$, such that for any non-zero polynomial $P$ without constant term and distinct $x_1,...,x_n \in X$ we have $P(x_1,...,x_n) \in E$.

The notions of $\kappa$-algebrability and strong $\kappa$-algebrability do not coincide. There is a simple example witnessing this: this is the subset $c_{00}$ of $c_0$ consisting of all sequences with real terms equal to zero from some place. It can be proved that $c_{00}$ is algebrable in $c_0$ but is not strongly $1$-algebrable \cite{BG}. 

The notion of algebrability was coined by Gurariy in the early 2000's and first the word 'algebrability' appeared in \cite{AGS} where the research program of finding large algebras inside given sets was posted.
The full definition of algebrability was introduced by Aron, P\'erez-Garc\'ia and Seoane-Sep\'ulveda in \cite{APSS} where it was proved that given a set $E\subset\mathbb{T}$ of measure zero, the set of continuous functions, whose Fourier series expansion is divergent at any point $t\in E$, is dense-algebrable, i.e., there exists an infinite-dimensional, infinitely generated dense subalgebra of $C(\mathbb{T})$, every non-zero element of which has a Fourier series expansion divergent in $E$. Aron and Seoane-Sep\'ulveda proved in \cite{ASS} that the set of everywhere surjective functions from $\C$ to $\C$ (that is, functions $f:\C\to\C$ which take every value $z\in\C$ at each open subset $U\neq\emptyset$ of $\C$, $\omega$-many times) is $\omega$-algebrable.  This was strengthened to $\mathfrak{c}$-algebrability by Aron, Conejero, Peris and Seoane-Sep\'ulveda in \cite{ACPSS}, and to $2^\mathfrak{c}$-algebrability by  Bartoszewicz, G\l \c ab, Pellegrino and Seoane-Sep\'ulveda in \cite{BGPS}. Some other results on algebrability can be found in \cite{GPPSS}, \cite{GPMSS} and \cite{BQ}. 

Most of known results on algebrability are focused on the cases between $\omega$-algebrability and $\mathfrak{c}$-algebrabilty. Recently, $2^\mathfrak{c}$-algebrability was established in $\C^\C$ and $\R^\R$ using independent families of subsets of $\mathfrak{c}$ and a decomposition of $\C$ and $\R$ into $\mathfrak{c}$ copies of Bernstein sets (see \cite{BG}, \cite{BGPS} and \cite{BBG}). 

The notion of strong $\kappa$-algebrability was introduced in \cite{BG}. As it was mentioned above, a strong version of algebrability was proved earlier in \cite{GPPSS}. Also, some strong algebrability results on sequence spaces can be found in \cite{BG}, \cite{BG1} and \cite{BGP}.  

The motivation for this note comes from \cite{BGPS}. There was proved that the set of Siepi\'nski-Zygmund functions is a strongly $\kappa$-algebrable subset of linear algebra $\R^\R$ where $\kappa$ is the cardinality of a family of almost disjoint subsets of $\R$. It was the first time when {\bf strong} $\kappa$-algebrability for $\kappa>\mathfrak{c}$ was proved, since in ZFC there is a family of almost disjoint subsets of $\R$ with cardinality $\mathfrak{c}^+$. The use of a family of almost disjoint subsets of $\R$ was the only known method which gave {\bf strong} $2^\mathfrak{c}$-algebrability of $\R^\R$. Recently G\'amez-Merino and Seoane-Sep\'ulveda noted in \cite{GMSS} that the existence of $2^\mathfrak{c}$ many almost disjoint subset of $\mathfrak{c}$ is independent of ZFC and strong $\kappa$-algebrability of the set of Siepi\'nski-Zygmund functions implies that there is a family of almost disjoint subsets of $\mathfrak{c}$ of cardinality $\kappa$. Thus the strong $2^\mathfrak{c}$-algebrability of the set of Siepi\'nski-Zygmund functions is undecidable in ZFC. Therefore the question arose if one can prove in ZFC that there is a free subalgebra with $2^\mathfrak{c}$ generators of $\R^\R$. Here we answer this question in the affirmative. Moreover, we give a direct method of proving strong $2^\mathfrak{c}$-algebrability of families of strange functions from $\R$ to $\R$. 

In Section 2 we prove a general result stating that, if $X$ is a set of cardinality $\kappa$ where $\kappa^\omega=\kappa$, then $\R^X$ and $\C^X$ contain free linear algebras with $2^\kappa$ generators, that are of the largest possible size. We introduce the notions of an abstract everywhere surjective functions with respect to $\mathcal F$ and we prove maximal strong algebrability of families of such functions. This gives a new insight into nature of the phenomena of algebrability  and lineability of families of surjective functions which was studied by several authors (see \cite{AGS}, \cite{GMMFSSS}, \cite{ACPSS}, \cite{ASS}, \cite{BGPS} and \cite{BBG}).  
%Then we conclude that the set of all strongly everywhere surjective functions from $\beta\kappa\setminus\kappa$ (the remainder of \v{C}ech-Stone compactification of $\kappa$) into $\C$ is strongly $2^{2^{2^\kappa}}$-algebrable. 
In Section 3 we apply the results from Section 2 to show strong algebrability of some families of strange functions in $\R^\R$ and $\C^\C$. Among them there are strongly everywhere surjective and perfectly everywhere surjective functions from $\C$ to $\C$, everywhere discontinuous Darboux functions from $\R$ to $\R$, and real functions whose sets of continuity points equals $K$ for some fixed $G_\delta$ set $K\subsetneq\R$.

%%%%%%%%%%%%%%%%%%%%%%%%%%%%%%%%%%%%%%%%%%%%%%%%%%%%%%%%%%%%%%%%%%%%%%%%%%%%%%
%%%%   Disjoint Refinement Lemma   %%%
%%%%%%%%%%%%%%%%%%%%%%%%%%%%%%%%%%%%%%%%%%%%%%%%%%%%%%%%%%%%%%%%%%%%%%%%%%%%%%

The following classical result of Kuratowski and Sierpi\'nski \cite{KS} will be useful in the sequel.
\begin{theorem}[Disjoint Refinement Lemma]
Let $\kappa\geq\omega$. For any sequence $\{P_\alpha:\alpha<\kappa\}$ of (unnecessarily distinct) sets of cardinality $\kappa$ there is a family $\{Q_\alpha:\alpha<\kappa\}$ of sets of cardinality $\kappa$ such that \begin{itemize}
\item[(a)] $\forall \alpha<\kappa\;\;(Q_\alpha\subset P_\alpha);$ 
\item[(b)] $\forall\alpha<\beta<\kappa\;\;(Q_\alpha\cap Q_\beta=\emptyset)$. 
\end{itemize}
\end{theorem}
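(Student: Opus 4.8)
The plan is to construct all the sets $Q_\alpha$ at once by a single transfinite recursion of length $\kappa$, choosing one new point at each step. First I would point out why the obvious recursion fails: if at stage $\alpha$ one simply lets $Q_\alpha$ be an arbitrary subset of $P_\alpha\setminus\bigcup_{\beta<\alpha}Q_\beta$ of cardinality $\kappa$, there is no reason this difference is nonempty — already for small $\alpha$ the set $\bigcup_{\beta<\alpha}Q_\beta$ may engulf all of $P_\alpha$. The way around this is to commit only $|\alpha|$-many (not $\kappa$-many) points to the first $\alpha$ of the $Q$'s, feeding the remaining ``slots'' of each $Q_\alpha$ back into the recursion at later stages.

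Concretely, fix a bijection $e\colon\kappa\to\kappa\times\kappa$ — one exists because $\kappa\cdot\kappa=\kappa$ for every infinite cardinal — and write $e(\gamma)=(\alpha_\gamma,\xi_\gamma)$. By recursion on $\gamma<\kappa$ I would pick a point $x_\gamma$ with $x_\gamma\in P_{\alpha_\gamma}$ and $x_\gamma\notin\{x_\delta:\delta<\gamma\}$. Such a choice is always available: since $\kappa$ is a cardinal, hence an initial ordinal, the forbidden set $\{x_\delta:\delta<\gamma\}$ has cardinality $|\gamma|<\kappa=|P_{\alpha_\gamma}|$, so $P_{\alpha_\gamma}\setminus\{x_\delta:\delta<\gamma\}\ne\emptyset$, and we select $x_\gamma$ from it using the axiom of choice.

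Finally I would set $Q_\alpha:=\{x_\gamma:\gamma<\kappa,\ \alpha_\gamma=\alpha\}$ for each $\alpha<\kappa$. Then $Q_\alpha\subseteq P_\alpha$ by the first condition on $x_\gamma$, which gives (a); the points $x_\gamma$ are pairwise distinct by the second condition, so the $Q_\alpha$ are pairwise disjoint, which gives (b); and $|Q_\alpha|=\kappa$ because, as $\gamma$ ranges over $\{\gamma:\alpha_\gamma=\alpha\}$, the second coordinate $\xi_\gamma$ ranges over all of $\kappa$ (by surjectivity of $e$) while distinct $\gamma$ yield distinct $x_\gamma$. The only delicate point is the cardinality bookkeeping inside the recursion — ensuring that strictly fewer than $\kappa$ points have been used before stage $\gamma$ — and this is precisely what passing through a bijection with $\kappa\times\kappa$ arranges; the rest is routine.
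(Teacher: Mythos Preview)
Your proof is correct; this is the standard diagonal argument for the Disjoint Refinement Lemma. Note, however, that the paper does not supply its own proof of this statement: it is quoted as a classical result of Kuratowski and Sierpi\'nski with a reference to \cite{KS}, so there is no in-paper proof to compare against. Your argument is essentially the one found in standard set-theory references, and the bookkeeping via a bijection $\kappa\to\kappa\times\kappa$ is exactly the right device to guarantee that fewer than $\kappa$ points have been used at each stage while still allotting $\kappa$ many slots to every $Q_\alpha$.
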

The family $\{Q_\alpha:\alpha<\kappa\}$ will be called a disjoint refinement of $\{P_\alpha:\alpha<\kappa\}$.

%%%%%%%%%%%%%%%%%%%%%%%%%%%%%%%%%%%%%%%%%%%%%%%%%%%%%%%%%%%%%%%%%%%%%%%%%%%%%%%
 									%%%%%   SECTION 2    %%%%%
%%%%%%%%%%%%%%%%%%%%%%%%%%%%%%%%%%%%%%%%%%%%%%%%%%%%%%%%%%%%%%%%%%%%%%%%%%%%%%%
\section{Large free subalgebras of $\C^X$ and $\R^X$}

We say that a family $\{A_s:s\in S\}$ of subsets of $X$ is independent if for any distinct $s_1,s_2,...,s_n\in S$ and any $\varepsilon_i\in\{0,1\}$ for $i=1,...,n$, the set $A^{\varepsilon_1}_{s_1}\cap...\cap A^{\varepsilon_n}_{s_n}$ is nonempty where $B^1=B$ and $B^{0}=X\setminus B$ for $B\subset X$. It is well known that for any set of cardinality $\kappa$ there is an independent family of cardinality $2^\kappa$ (see \cite{BF}).

In the next theorem we prove that the linear algebras $\R^X$ and $\C^X$ contain free linear algebras with $2^{|X|}$ generators, that are of largest possible size. Note that there exist large linear algebras which does not have even free linear subalgebras of one generator, cf. \cite{BBG}, which makes the problem of finding large free subalgebra non trivial. Moreover, as we will show in Section 3, this result can be applied to prove the strong $2^\mathfrak{c}$-algebrability of some families of strange functions in $\R^\R$ and $\C^\C$.

%%%%%%%%%%%%%%%%%%%%%%%%%%%%%%%%%%%%%%%%%%%%%%%%%%%%%%%%%%%%%%%%%%%%%%%%%%%%%%
% main theorem
%%%%%%%%%%%%%%%%%%%%%%%%%%%%%%%%%%%%%%%%%%%%%%%%%%%%%%%%%%%%%%%%%%%%%%%%%%%%%%

\begin{theorem}\label{main}
Let $X$ be a set of cardinality $\kappa$ where $\kappa=\kappa^\omega$. Let $I$ be a subset of $\R$ (or $\C$) with the nonempty interior. Then there exists a free linear subalgebra of $\R^X$ (or $\C^X$) of $2^\kappa$ generators $\{f_\xi:\xi<2^\kappa\}$ such that $P(f_{\xi_1},\dots,f_{\xi_k})$ maps $X$ onto $P(I^k)$ for every polynomial $P$ in $k$ variables without constant term and any $\xi_1<\xi_2<\dots<\xi_k<2^\kappa$. 
\end{theorem}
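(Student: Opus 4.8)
The plan is to reduce the whole statement to the existence of what one might call an \emph{independent family of functions}, and then to build such a family out of an ordinary independent family of sets together with the hypothesis $\kappa=\kappa^\omega$. Call a family $\{f_\xi:\xi<2^\kappa\}\subset I^X$ independent if for all distinct $\xi_1,\dots,\xi_k<2^\kappa$ and all $(t_1,\dots,t_k)\in I^k$ there is $x\in X$ with $f_{\xi_i}(x)=t_i$ for every $i$. First I would check that any such family already does everything: for a polynomial $P$ in $k$ variables without constant term and $\xi_1<\dots<\xi_k$, the function $P(f_{\xi_1},\dots,f_{\xi_k})$ visibly takes its values in $P(I^k)$, and independence makes every value $P(t_1,\dots,t_k)$ with $(t_1,\dots,t_k)\in I^k$ attained, so it maps $X$ onto $P(I^k)$. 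Since a nonzero $P$ cannot vanish on the $k$-fold product of the (nonempty) interior of $I$, which is a nonempty open subset of $\R^k$ (or $\C^k$), we get $P(I^k)\neq\{0\}$, hence $P(f_{\xi_1},\dots,f_{\xi_k})\neq 0$ whenever $P\neq 0$; thus distinct polynomials without constant term give distinct functions, which is exactly freeness of $\{f_\xi:\xi<2^\kappa\}$.

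To construct the independent family I would use $\kappa=\kappa^\omega$ to identify $X$ with $X_0^\omega$ for a set $X_0$ with $|X_0|=\kappa$. On $X_0$ fix an independent family of $2^\kappa$ subsets (which exists by the fact quoted above) and reindex it as $\{B_{\xi,n}:\xi<2^\kappa,\ n<\omega\}$; it is still independent since $2^\kappa\cdot\omega=2^\kappa$. As $\kappa=\kappa^\omega\geq 2^\omega=\cont\geq|I|$, fix a surjection $\pi:\{0,1\}^\omega\to I$. Then for $x=(x_m)_{m<\omega}\in X_0^\omega$ define $f_\xi(x)=\pi\big((\varepsilon_m)_{m<\omega}\big)$, where $\varepsilon_m=1$ if $x_m\in B_{\xi,m}$ and $\varepsilon_m=0$ otherwise.

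The verification of independence is where the one genuine idea sits, and it is also where the main obstacle lies. Given distinct $\xi_1,\dots,\xi_k$ and targets $t_1,\dots,t_k\in I$, choose $c^{(j)}\in\{0,1\}^\omega$ with $\pi(c^{(j)})=t_j$. For each fixed $m<\omega$ the sets $B_{\xi_1,m},\dots,B_{\xi_k,m}$ are distinct members of the independent family, so the Boolean combination $B_{\xi_1,m}^{c^{(1)}_m}\cap\dots\cap B_{\xi_k,m}^{c^{(k)}_m}$ (with $B^1=B$, $B^0=X_0\setminus B$) is nonempty; pick $x_m$ in it and set $x=(x_m)_{m<\omega}$. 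Then $f_{\xi_j}(x)=\pi(c^{(j)})=t_j$ for every $j$. The point is that encoding a value of $I$ needs infinitely many bits, while an ordinary independent family only controls \emph{finite} Boolean combinations; distributing the $m$-th bit of the code to the $m$-th coordinate of $X_0^\omega$ — which is legitimate precisely because $\kappa^\omega=\kappa$ — replaces the forbidden infinite intersection by one finite intersection per coordinate. The complex case is the same argument verbatim, with $I\subset\C$, $|I|\leq\cont$, and the polynomial identity theorem supplying $P(I^k)\neq\{0\}$ for $P\neq 0$; finally one transports the functions $f_\xi$ back to the originally given set $X$ through any bijection with $X_0^\omega$.
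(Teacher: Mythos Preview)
Your proof is correct and reaches the same conclusion, but via a different encoding than the paper's. Both arguments exploit $\kappa^\omega=\kappa$ to replace $X$ by an $\omega$-indexed product and then use an independent family of size $2^\kappa$; the difference lies in how values in $I$ are produced. The paper works on $Y=([0,1]\times\kappa)^\N$ and sets $\bar f_\xi(t_1,y_1,t_2,y_2,\dots)=\prod_{n} t_n^{\chi_{A_\xi}(y_n)}$ for a \emph{singly} indexed independent family $\{A_\xi\}$ on $\kappa$, then composes with bijections $\psi:[0,1]\to I$ and $\phi:X\to Y$; the real coordinates $t_n$ carry the target values while the $y_n$'s select which factors enter the product for a given $\xi$. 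You instead work on $X_0^\omega$, \emph{double}-index the independent family as $\{B_{\xi,m}\}$, and read off one bit per coordinate to feed a surjection $\{0,1\}^\omega\to I$. Your route is arguably more elementary---no infinite products of reals, no $0^0=1$ convention, and the ``independent family of functions'' abstraction isolates cleanly what is needed---while the paper's keeps the independent family singly indexed and makes the transfer to $I$ via a single bijection with $[0,1]$. Both proofs close with exactly the same observation: a nonzero polynomial cannot vanish on a nonempty open subset of $\R^k$ (or $\C^k$), hence $P(I^k)\neq\{0\}$, and freeness follows.
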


\begin{proof}
At first, note that if $\kappa^\omega=\kappa$, then $\kappa\geq\mathfrak{c}$. Therefore $|X|\geq|I|$, so there are surjections from $X$ to $I$. Let $Y=\left([0,1]\times\kappa\right)^\N$ and fix a family $\{A_\xi:\xi<2^\kappa\}$ of independent subsets of $\kappa$. For each $\xi<2^\kappa$, define $\bar{f}_\xi:Y\to[0,1]$ by the formula
$$
\bar{f}_\xi(t_1,y_1,t_2,y_2,...)=\prod_{n=1}^\infty t_n^{\chi_{A_\xi}(y_n)},
$$
where $t_n\in[0,1]$, $y_n\in\kappa$, $\chi_{A_\xi}$ stands for the characteristic function of $A_\xi$, and $0^0=1$. Since $\kappa^\omega=\kappa$ and $\kappa\geq\mathfrak{c}$, then $|Y|=|X|$. Note also that $I$ is of cardinality $\mathfrak{c}$ since it has nonempty interior. Hence we can find two bijections $\phi:X\to Y$ and $\psi:[0,1]\to I$. Then functions $f_\xi=\psi\circ\bar{f}_\xi\circ\phi$, $\xi<2^\kappa$, are free generators in $\R^X$ (or in $\C^X$). Indeed, take $\xi_1<\dots<\xi_k<2^\kappa$ and consider the set
$$
Y_0=\Big\{(t_1,\bar{y}_1,t_2,\bar{y}_1,\dots)\in Y:t_1,\dots,t_k\in[0,1],\; t_i=1/2 \mbox{ for } i>k,\; 
$$
$$
\bar{y}_i\in A_{\xi_i}\setminus\bigcup_{j\neq i}A_{\xi_j} \mbox{ for } i\leq k\mbox{, and }\bar{y}_i\in\bigcap_{j}A_{\xi_j}^c\mbox{ for } i>k\Big\}.
$$
Let $x\in X_0=\phi^{-1}(Y_0)$. Then there are $t_1,\dots,t_k\in[0,1]$ such that $\phi(x)=(t_1,\bar{y}_1,t_2,\bar{y}_1,\dots)\in Y_0$. Let $P$ be a non-zero polynomial in $k$ variables without a constant term. Then
$$
P(f_{\xi_1},\dots,f_{\xi_k})(x)=P(\psi\circ\bar{f}_{\xi_1},\dots,\psi\circ\bar{f}_{\xi_k})(\phi(x))=
P(\psi(t_1),\dots,\psi(t_k)).
$$
Note that for any $z_1,\dots,z_k\in[0,1]$ there is $x\in X_0$ such that 
$$
P(f_{\xi_1},\dots,f_{\xi_k})(x)=P(z_1,\dots,z_k).
$$ 
Since $I$ has the nonempty interior, then $P$ is non-zero on $I^k$. Therefore the function $P(f_{\xi_1},\dots,f_{\xi_k})$ is non-zero on $X_0$, and consequently, it is non-zero on $X$. 

Finally, observe that each function $f_\xi$ is a surjection from $X$ onto $I$. 
\end{proof}

Let us note that the condition $\kappa^\omega=\kappa$ implies that $\kappa\geq\mathfrak{c}$ and $\on{cf}(\kappa)>\omega$. On the other hand, using the well known equality $(\kappa^+)^\lambda=\kappa^+\cdot\kappa^\lambda$ for $\omega\leq\lambda\leq\kappa$, we easily obtain that $\kappa^\omega=\kappa$ provided $\kappa=2^\lambda$ or $\kappa=\omega_n\geq\mathfrak{c}$. 

Let us remark that the function $\bar{f}_\xi$ has a similar definition to that of the function $H_C$ used in the proof of \cite[Proposition 4.2.]{AGS}. However, the functions of type $H_C$ are not algebraically independent for any choices of sets $C$, and therefore they are not appropriate for our purpose.

%%%%%%%%%%%%%%%%%%%%%%%%%%%%%%%%%%%%%%%%%%%%%%%%%%%%%%%%%%%%%%%%%%%%%%%%%%%%
%Definicja funkcji F-ES(I)
%%%%%%%%%%%%%%%%%%%%%%%%%%%%%%%%%%%%%%%%%%%%%%%%%%%%%%%%%%%%%%%%%%%%%%%%%%%%

Now, we will introduce the notion of strongly everywhere surjective function with respect to some family of sets. Our abstract definition generalizes several known notions of everywhere surjective functions. We will prove a general theorem on strong algebrability of the set of strongly everywhere surjective functions with respect to a family of functions. Several corollaries following from this theorem will be presented in the sequel. 

Let $\mathcal F\subset\mathcal{P}(X)$ and $I\subset\C$ (or $\R$). A function $f\in\C^X$ (or in $\R^X$) is $I$-strongly everywhere surjective with respect to $\mathcal F$, in short $f\in\mathcal{SES}(I,\mathcal F)$, if for every set $F\in\mathcal F$ there are $n\in\N$ and a polynomial in $n$ variable such that $f(F)=P(I^n)$, and $|\{x\in F:f(x)=y\}|=|X|$ for any $y\in f(F)$. If $\mathcal F$ consists of all nonempty open sets, then $\mathcal{SES}(\C,\mathcal F)$ is the family of all strongly everywhere surjective complex functions, i.e. the family of functions which map every nonempty open set onto $\C$.  
If $\mathcal F$ consists of all nonempty perfect sets, then $\mathcal{SES}(\C,\mathcal F)$ is the family of all perfectly everywhere surjective complex functions, i.e. the family of functions which map every nonempty perfect set onto $\C$. If $\mathcal F$ consists of all nonempty perfect sets, then $\mathcal{SES}([0,1],\mathcal F)$ consists of real functions $f$ such that $f$ map every uncountable compact set onto a compact set, $f$ is Darboux and the restriction $f\vert_B$ to any uncountable Borel set $B$ is not continuous. Note that functions in $\mathcal{SES}(\R,\mathcal F)$ need not to be surjective, since $P(\R^n)$ not necessary equals $\R$, however, functions from $\mathcal{SES}(\C,\mathcal F)$ are always surjective.
 
\begin{theorem}\label{main1}
Let $X$ be a set of cardinality $\kappa$ where $\kappa=\kappa^\omega$. Let $I$ be a subset of $\C$ (or $\R$) with the nonempty interior. Assume that $|\mathcal F|\leq\kappa$ and $|F|=\kappa$ for every $F\in\mathcal F$. Then the family $\mathcal{SES}(I,\mathcal F)$ is strongly $2^\kappa$-algebrable.
\end{theorem}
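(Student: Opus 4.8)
The plan is to rerun the proof of Theorem~\ref{main}, but to replace the bijection $\phi\colon X\to Y$ used there by a (no longer injective) map $\phi\colon X\to Y$ whose restriction to every member of $\mathcal F$ is a surjection onto $Y$ with all fibres of cardinality $\kappa$. This extra multiplicity is precisely what turns the conclusion of Theorem~\ref{main}, that $P(f_{\xi_1},\dots,f_{\xi_k})$ maps $X$ onto $P(I^k)$, into the stronger statement $|\{x\in F:g(x)=y\}|=|X|$ for all $y\in g(F)$ demanded by the definition of $\mathcal{SES}(I,\mathcal F)$.

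Concretely, I would keep $Y=([0,1]\times\kappa)^\N$ (so $|Y|=\kappa$ since $\kappa=\kappa^\omega\geq\mathfrak c$), a bijection $\psi\colon[0,1]\to I$ (available since the nonempty interior of $I$ forces $|I|=\mathfrak c$), an independent family $\{A_\xi:\xi<2^\kappa\}$ of subsets of $\kappa$, and the functions $\bar{f}_\xi\colon Y\to[0,1]$, $\bar{f}_\xi(t_1,y_1,t_2,y_2,\dots)=\prod_n t_n^{\chi_{A_\xi}(y_n)}$, exactly as in the proof of Theorem~\ref{main}. To build $\phi$, I would enumerate $\mathcal F$ as $\{F_\alpha:\alpha<\mu\}$ with $\mu\le\kappa$, treat it (repeating members if necessary) as a $\kappa$-sequence of sets of cardinality $\kappa$, and invoke the Disjoint Refinement Lemma to obtain pairwise disjoint $Q_\alpha\subset F_\alpha$ with $|Q_\alpha|=\kappa$. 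For each $\alpha$ I would fix a surjection $\phi_\alpha\colon Q_\alpha\to Y$ all of whose fibres have cardinality $\kappa$ (take a bijection $Q_\alpha\to Y\times\kappa$ composed with the projection to $Y$), let $\phi$ agree with $\phi_\alpha$ on each $Q_\alpha$ and be arbitrary off $\bigcup_\alpha Q_\alpha$, and put $f_\xi=\psi\circ\bar{f}_\xi\circ\phi$. Because the $Q_\alpha$ are pairwise disjoint and $Q_\alpha\subset F_\alpha$, this gives $|\phi^{-1}(y)\cap F|=\kappa$ for every $F\in\mathcal F$ and every $y\in Y$ at once.

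It then remains to verify that $\{f_\xi:\xi<2^\kappa\}$ generates a free algebra inside $\mathcal{SES}(I,\mathcal F)\cup\{0\}$. Fix a non-zero polynomial $P$ in $k$ variables without constant term, $\xi_1<\dots<\xi_k<2^\kappa$, a set $F\in\mathcal F$, and put $g=P(f_{\xi_1},\dots,f_{\xi_k})$. One inclusion, $g(F)\subset P(I^k)$, is immediate since every $f_{\xi_i}$ takes values in $I$. For the reverse inclusion and the multiplicity I would reuse the set $Y_0$ from the proof of Theorem~\ref{main}: independence of the $A_\xi$'s makes $Y_0$ (and each Boolean combination of the $A_\xi$'s appearing in its definition) nonempty, and on $\phi^{-1}(Y_0)$ the product formula forces $\bar{f}_{\xi_i}(\phi(x))$ to equal the $i$-th $[0,1]$-coordinate $t_i$ of $\phi(x)$, whence $g(x)=P(\psi(t_1),\dots,\psi(t_k))$. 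Given $y\in P(I^k)$, choose $(z_1,\dots,z_k)\in I^k$ with $P(z_1,\dots,z_k)=y$; the subset $Y_0'\subset Y_0$ on which the first $k$ coordinates $t_1,\dots,t_k$ are pinned to $(\psi^{-1}(z_1),\dots,\psi^{-1}(z_k))$ is nonempty, so $\phi^{-1}(Y_0')\cap F$ has cardinality $\kappa$ by the fibre property of $\phi$, and $g$ is constantly $y$ on it. This yields $g(F)=P(I^k)$ and $|\{x\in F:g(x)=y\}|=\kappa=|X|$ for all $y\in g(F)$, i.e.\ $g\in\mathcal{SES}(I,\mathcal F)$ with $n=k$ and polynomial $P$; in particular $g\neq0$, which is exactly the freeness of the generators.

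The one genuinely new issue, compared with Theorem~\ref{main}, is the construction of a single $\phi$ that is ``onto $Y$ with full fibres'' over every $F\in\mathcal F$ simultaneously; the hypotheses $|\mathcal F|\le\kappa$ and $|F|=\kappa$ are tailored precisely to make the Disjoint Refinement Lemma do this for us, and once $\phi$ is in hand the rest is the computation already carried out for Theorem~\ref{main}. I expect no further obstacles; in particular, and in contrast to some earlier constructions, no lower bound on the sizes of the Boolean combinations of the independent family is needed, since the required multiplicity is furnished by the fibres of $\phi$.
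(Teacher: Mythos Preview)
Your proof is correct and follows essentially the same route as the paper: both invoke the Disjoint Refinement Lemma to carve out pairwise disjoint $Q_\alpha\subset F_\alpha$ of size $\kappa$ and then run the construction of Theorem~\ref{main} on each piece. The only packaging difference is that the paper lists every $F\in\mathcal F$ exactly $\kappa$ times before refining (so the required multiplicity comes from the $\kappa$ many disjoint pieces inside each $F$, each carrying a bijection to $Y$), whereas you list each $F$ once and load the multiplicity into the $\kappa$-sized fibres of $\phi_\alpha$ instead.
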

\begin{proof}
Let $\{Q_\xi:\xi<\kappa\}$ be a disjoint refinement of a sequence $\mathcal \{F_\xi:\xi<\kappa\}$ in which every element of $\mathcal F$ appears $\kappa$ many times. Adding, if necessary, the elements of the remaining set $X\setminus\bigcup_{\xi<\kappa}Q_\xi$ to $Q_0$ we may assume that $\bigcup_{\xi<\kappa}Q_\xi=X$.   
By Theorem \ref{main}, there is a free algebra $\mathcal A_\xi$ of functions of $2^\kappa$ generators $f_\eta^\xi$, $\eta<2^\kappa$, being surjections from $Q_\xi$ onto $I$, and such that for any polynomial $P$ in $k$ variables without constant term and $\eta_1<\dots<\eta_k<2^\mathfrak{c}$ the function $f:=P(f_{\eta_1}^\xi,\dots,f_{\eta_k}^\xi)$ maps $Q_\xi$ onto $P(I^k)$.

Let $\mathcal A$ be the algebra generated by $\{f_\eta:\eta<2^\kappa\}$ where $f_\eta(x)=f_\eta^\xi(x)$ if $x\in Q_\xi$. Since any $F$ from $\mathcal F$ appears $\kappa$ many times in the sequence $\mathcal \{F_\xi:\xi<\kappa\}$, then $\mathcal A$ is a free linear algebra contained in $\mathcal{SES}(I,\mathcal F)\cup\{0\}$.
\end{proof}

%%%%%%%%%%%%%%%%%%%%%%%%%%%%%%%%%%%%%%%%%%%%%%%%%%%%%%%%%%%%%%%%%%%%%%%%%%%%%%%
%									Coinitial families
%%%%%%%%%%%%%%%%%%%%%%%%%%%%%%%%%%%%%%%%%%%%%%%%%%%%%%%%%%%%%%%%%%%%%%%%%%%%%%%

Let $\mathcal F_1$ and $\mathcal F_2$ be two families of subsets of some set $X$. We say that $\mathcal F_1$ is co-initial with $\mathcal F_2$ if for every $F_2\in\mathcal F_2$ there is $F_1\in\mathcal F_1$ such that $F_1\subset F_2$. Note that if $\mathcal F_1$ is co-initial with $\mathcal F_2$, then $\mathcal{SES}(I,\mathcal F_1)\subset\mathcal{SES}(I,\mathcal F_2)$. For example if $X=\C$, $\mathcal F_1$ stands for the family of all nonempty perfect sets, and $\mathcal F_2$ stands for the family of all uncountable Borel sets, then $\mathcal F_1$ and $\mathcal F_2$ are mutually co-initial. Therefore, in this case $\mathcal{SES}(I,\mathcal F_1)=\mathcal{SES}(I,\mathcal F_2)$. 

The next theorem gives us an information on the algebrability of $\mathcal{SES}(I,\mathcal F_1)\setminus\mathcal{SES}(I,\mathcal F_2)$ in the case when $\mathcal F_1$ is not co-initial with $\mathcal F_2$. 

\begin{theorem}\label{main2}
Let $X$ be a set of cardinality $\kappa$ where $\kappa=\kappa^\omega$. Let $I$ be a subset of $\R$ (or $\C$) with the nonempty interior. Assume that $|\mathcal F_i|=\kappa$ and $|F|=\kappa$ for every $F\in\mathcal F_i$, $i=1,2$. Suppose that there is a set $F_2\in\mathcal F_2$ such that $\vert F_1\setminus F_2\vert=\kappa$ for every $F_1\in\mathcal F_1$. 
Then the family $\mathcal{SES}(I,\mathcal F_1)\setminus\mathcal{SES}(I,\mathcal F_2)$ is strongly $2^\kappa$-algebrable.
\end{theorem}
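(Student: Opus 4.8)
The plan is to modify the construction in the proof of Theorem~\ref{main1} so that every generator, hence every nonzero element of the free algebra, already lies in $\mathcal{SES}(I,\mathcal F_1)$, and then to "damage" these functions on the witnessing set $F_2$ in a way that destroys membership in $\mathcal{SES}(I,\mathcal F_2)$ while costing us nothing on the $\mathcal F_1$ side. Concretely, fix the set $F_2\in\mathcal F_2$ from the hypothesis. Since $|F_1\setminus F_2|=\kappa$ for every $F_1\in\mathcal F_1$, the family $\mathcal F_1':=\{F_1\setminus F_2:F_1\in\mathcal F_1\}$ still consists of sets of cardinality $\kappa$, with $|\mathcal F_1'|\le\kappa$. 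First I would run the argument of Theorem~\ref{main1} on the set $X\setminus F_2$ (which still has cardinality $\kappa$, as $\kappa$ is regular and uncountable, or simply because it contains some $F_1\setminus F_2$) with the family $\mathcal F_1'$: take a disjoint refinement $\{Q_\xi:\xi<\kappa\}$ of a sequence listing each member of $\mathcal F_1'$ exactly $\kappa$ many times, absorb the leftover part of $X\setminus F_2$ into $Q_0$, apply Theorem~\ref{main} on each $Q_\xi$, and glue to obtain a free algebra $\mathcal A_0$ of functions $g_\eta$ on $X\setminus F_2$ whose every nonzero element $P(g_{\eta_1},\dots,g_{\eta_k})$ maps every $F_1\setminus F_2$ (hence every $F_1\in\mathcal F_1$) onto $P(I^k)$ and is $\kappa$-to-one onto its range there.

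Next I would extend each generator over $F_2$ so as to spoil $\mathcal{SES}(I,\mathcal F_2)$. Choose any point $v$ in the interior of $I$ with $v\ne 0$ (possible after a harmless affine reparametrisation of $I$, or just pick $v$ interior and nonzero since a nonempty open subset of $\R$ or $\C$ contains such a point), and set $f_\eta(x)=g_\eta(x)$ for $x\in X\setminus F_2$ and $f_\eta(x)=v$ for $x\in F_2$. Then for any nonzero polynomial $P$ without constant term, $P(f_{\eta_1},\dots,f_{\eta_k})$ is identically equal to the single value $P(v,\dots,v)$ on $F_2$; since $I$ has nonempty interior $P$ is nonzero on $I^k$ by the usual analytic-continuation argument used in Theorem~\ref{main}, so after fixing $P$ we may, if this one value happens to be $0$, instead take $v$ to be any of the continuum-many interior points for which $P(v,\dots,v)\ne 0$ — but since $v$ must be chosen before $P$, the cleaner route is: observe that $P(f_{\eta_1},\dots,f_{\eta_k})$ restricted to $F_2$ takes at most one value, so its image on $F_2\in\mathcal F_2$ is a single point (or empty), which can never equal $P(I^n)$ for any $n$ and any polynomial $P$, because $P(I^n)$ is the continuous image of a set with nonempty interior and hence has more than one point. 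Therefore $P(f_{\eta_1},\dots,f_{\eta_k})\notin\mathcal{SES}(I,\mathcal F_2)$, while on $\mathcal F_1$ it behaves exactly as $P(g_{\eta_1},\dots,g_{\eta_k})$ does on $F_1\setminus F_2$, so it lies in $\mathcal{SES}(I,\mathcal F_1)$.

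It remains to check that $\{f_\eta:\eta<2^\kappa\}$ freely generates an algebra: if $P\ne 0$ is a polynomial without constant term, then $P(f_{\eta_1},\dots,f_{\eta_k})$ agrees with $P(g_{\eta_1},\dots,g_{\eta_k})$ on $X\setminus F_2$, and the latter is not the zero function there (it is surjective onto $P(I^k)$ on some $Q_\xi$, and $P$ is nonzero on $I^k$), so $P(f_{\eta_1},\dots,f_{\eta_k})\ne 0$; this gives both linear independence of the monomials and absence of zero divisors, i.e. freeness. The image condition "$f(F_1)=P(I^n)$ and $|\{x\in F_1:f(x)=y\}|=|X|$ for $y\in f(F_1)$" required for membership in $\mathcal{SES}(I,\mathcal F_1)$ follows because each $F_1\in\mathcal F_1$ meets infinitely — in fact $\kappa$ — many of the $Q_\xi$ indexed by copies of $F_1\setminus F_2$ in the enumeration, exactly as in Theorem~\ref{main1}. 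The only genuinely delicate point is the coordination of the value $v$ with the polynomial $P$; I expect the main obstacle is to phrase step two so that a single extension works simultaneously for all $P$, which is handled precisely by the observation that a one-point set can never be of the form $P(I^n)$, so no choice of $v$ can accidentally make $P(f_{\eta_1},\dots,f_{\eta_k})$ land back in $\mathcal{SES}(I,\mathcal F_2)$.
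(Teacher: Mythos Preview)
Your argument is correct and follows the same route as the paper: apply Theorem~\ref{main1} on $X\setminus F_2$ with the family $\mathcal F_1'=\{F_1\setminus F_2:F_1\in\mathcal F_1\}$ to obtain a free algebra inside $\mathcal{SES}(I,\mathcal F_1')\cup\{0\}$, and then extend every function to $F_2$ by a constant so that each nonzero element is constant on $F_2$ and hence fails the $\mathcal{SES}(I,\mathcal F_2)$ condition there. The only difference is the value of that constant --- the paper takes it to be $0$, you take a point $v\in\operatorname{int}(I)$; your choice has the small advantage that $P(v,\dots,v)\in P(I^k)$ automatically, so $f(F_1)=P(I^k)$ exactly even when $F_1$ meets $F_2$, while the paper's choice implicitly relies on $0\in P(I^k)$ (which is guaranteed once $0\in I$, as in all the applications).
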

\begin{proof}
Note that the set $X\setminus F_2$, the family $\mathcal F'_1=\{F_1\setminus F_2:F_1\in\mathcal F_1\}$ and every $F\in\mathcal F'_1$ are of cardinality $\kappa$. Therefore by Theorem \ref{main1} there is a free algebra $\mathcal A'$ of $2^\kappa$ generators contained in $\mathcal{SES}(I,\mathcal F'_1)\cup\{0\}$. Let $\mathcal A$ consist of functions $f$ such that $f\vert_{X\setminus F_2}\in\mathcal A'$ and $f\vert_{F_2}=0$. Note that $\mathcal A$ is a free algebra. Clearly, each nonzero $f\in\mathcal A$ is also in $\mathcal{SES}(I,\mathcal F_1)$. Since $I$ has the nonempty interior, then any nonzero polynomial $P$ in $n$ variables is non-constant on $I^n$. Thus $P(I^n)\neq \{0\}$ for every nonzero polynomial $P$ in $n$ variables which means that none $f\in\mathcal A$ is a member of $\mathcal{SES}(I,\mathcal F_2)$.
\end{proof}

%%%%%%%%%%%%%%%%%%%%%%%%%%%%%%%%%%%%%%%%%%%%%%%%%%%%%%%%%%%%%%%%%%%%%%%%%%%%%%%
%										Applications 
%%%%%%%%%%%%%%%%%%%%%%%%%%%%%%%%%%%%%%%%%%%%%%%%%%%%%%%%%%%%%%%%%%%%%%%%%%%%%%%
\section{Large free subalgebras of $\C^X$ or $\R^X$ consisting of strange functions}

%%%%%%%%%%%%%%%%%%%%%%%%%%%%%%%%%%%%%%%%%%%%%%%%%%%%%%%%%%%%%%%%%%%%%%%%%%%%%%%
%SES functions on the remaider of \beta(\kappa)
%%%%%%%%%%%%%%%%%%%%%%%%%%%%%%%%%%%%%%%%%%%%%%%%%%%%%%%%%%%%%%%%%%%%%%%%%%%%%%%

\subsection{Strongly everywhere surjective functions on $\beta\kappa\setminus\kappa$}

Let $\beta\kappa$ be the \v{C}ech-Stone compactification of $\kappa$ equipped with the discrete topology. Then every nonempty open subset of the remainder $\beta\kappa\setminus\kappa$ has the cardinality $2^{2^\kappa}$. Thus the linear algebra $\C^{\beta\kappa\setminus\kappa}$ has $2^{2^{2^\kappa}}$ elements. By Theorem \ref{main} the linear algebra $\C^{\beta\kappa\setminus\kappa}$ contains a free linear algebra of $2^{2^{2^\kappa}}$ generators. Now, we will prove that a free linear algebra $\mathcal A$ of $2^{2^{2^\kappa}}$ generators of $\C^{\beta\kappa\setminus\kappa}$ can be chosen in such a way that $\mathcal A\setminus\{0\}$ consists entirely of strongly everywhere surjective functions (that is strongly everywhere surjective functions with respect to the family of all nonempty open subsets of the remainder $\beta\kappa\setminus\kappa$).

\begin{theorem}
The set of all strongly everywhere surjective functions $f:\beta\kappa\setminus\kappa\to\C$ is strongly $2^{2^{2^\kappa}}$-algebrable.   
\end{theorem}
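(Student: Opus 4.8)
The plan is to obtain this as an immediate instance of Theorem~\ref{main1}, taking the ground set to be $X=\beta\kappa\setminus\kappa$. Write $\lambda=|X|=2^{2^\kappa}$. Since $\lambda=2^{\mu}$ with $\mu=2^{\kappa}$, we get $\lambda^{\omega}=2^{\mu\cdot\omega}=2^{\mu}=\lambda$, so $X$ satisfies the standing cardinality assumption of Theorem~\ref{main1} (with the $\kappa$ of that theorem replaced by $\lambda$). I would take $I=\C$ and let $\mathcal F$ be the family of all nonempty open subsets of $X$.

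Next I would verify the two size conditions on $\mathcal F$ required by Theorem~\ref{main1}. That $|F|=\lambda$ for every $F\in\mathcal F$ is exactly the fact recalled just before the statement, namely that every nonempty open subset of $\beta\kappa\setminus\kappa$ has cardinality $2^{2^{\kappa}}$. For the bound $|\mathcal F|\le\lambda$, observe that the clopen sets $A^{*}=\overline{A}\cap X$ with $A\subseteq\kappa$ form a base of $X$, and there are at most $2^{\kappa}$ of them; since a topological space with a base of size $\nu$ has at most $2^{\nu}$ open sets, this gives $|\mathcal F|\le 2^{2^{\kappa}}=\lambda$. With both conditions checked, Theorem~\ref{main1} yields a free subalgebra of $\C^{X}$ with $2^{\lambda}=2^{2^{2^{\kappa}}}$ generators contained in $\mathcal{SES}(\C,\mathcal F)\cup\{0\}$.

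Finally I would identify $\mathcal{SES}(\C,\mathcal F)$ with the set named in the statement. For $I=\C$, any nonzero polynomial $P$ in $n$ variables without constant term is non-constant, hence restricting it to a complex line along which it is non-constant and applying the fundamental theorem of algebra shows $P(\C^{n})=\C$. Therefore a function $f\in\C^{X}$ lies in $\mathcal{SES}(\C,\mathcal F)$ precisely when $f(F)=\C$ and $|\{x\in F:f(x)=y\}|=\lambda$ for every nonempty open $F\subseteq X$ and every $y\in\C$, i.e.\ precisely when $f$ is strongly everywhere surjective; so the algebra produced above is the desired witness.

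I do not expect a real obstacle: the substance of the argument is the bookkeeping in the second paragraph, and the one ingredient that has to be taken on faith (it is quoted, not reproved, in the excerpt) is the value $|F|=2^{2^{\kappa}}$ for nonempty open $F\subseteq\beta\kappa\setminus\kappa$; everything else is the identity $\lambda^{\omega}=\lambda$, the weight estimate for $\beta\kappa\setminus\kappa$, and then a direct appeal to Theorem~\ref{main1}.
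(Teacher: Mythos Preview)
Your proposal is correct and follows essentially the same route as the paper's own proof: both verify the hypotheses of Theorem~\ref{main1} for $X=\beta\kappa\setminus\kappa$ (using the basic clopen sets $U_A$ to bound the weight and to see that every nonempty open set has size $2^{2^\kappa}$) and then invoke that theorem. Your write-up simply spells out a few details the paper leaves implicit, such as the computation $\lambda^\omega=\lambda$ and the identification $P(\C^n)=\C$ for nonzero $P$ without constant term.
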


\begin{proof}
Recall that the basis of $\beta\kappa\setminus\kappa$ consists of sets $U_A=\{p: p$ is a non-principle ultrafilter on $\kappa$ such that $A\in p\}$ for $A\subset\kappa$. Every set $U_A$ is of cardinality $2^{2^\kappa}$. Moreover, there are $2^{2^\kappa}$ distinct non-principal untrafilters on $\kappa$. Now, the result follows from Theorem \ref{main1}.
\end{proof}

%%%%%%%%%%%%%%%%%%%%%%%%%%%%%%%%%%%%%%%%%%%%%%%%%%%%%%%%%%%%%%%%%%%%%%%%%%%%%%%
%  PES
%%%%%%%%%%%%%%%%%%%%%%%%%%%%%%%%%%%%%%%%%%%%%%%%%%%%%%%%%%%%%%%%%%%%%%%%%%%%%%%

\subsection{Perfectly everywhere surjective and strongly everywhere surjective functions from $\C$ to $\C$.}
In this section, we will consider some strange complex (and real) functions. A function $f:\C\to\C$ is called \emph{strongly everywhere surjective} if $f$ takes $\mathfrak{c}$ many times every value $z\in\C$ on every nonempty open subset of $\C$. Recall that a function $f:\C\to\C$ is called \emph{perfectly everywhere surjective} if $f$ maps every perfect subset of $\C$ onto $\C$. We will denote the above classes of functions by $\mathcal{SES}(\C)$ and $\mathcal{PES}(\C)$, respectively. Since every perfect set contains $\mathfrak{c}$ many pairwise disjoint perfect sets, then $\mathcal{SES}\subset\mathcal{PES}$. If $\mathcal F_{\on{perf}}$ and $\mathcal F_{\on{open}}$ stand for the families of all nonempty perfect and nonempty open, respectively, then $\mathcal{SES}(\C,\mathcal F_{\on{perf}})=\mathcal{PES}(\C)$ and $\mathcal{SES}(\C,\mathcal F_{\on{open}})=\mathcal{SES}(\C)$.

The lineability and algebrability of classes of surjective functions were studied by several authors, see \cite{AGS}, \cite{GMMFSSS}, \cite{ACPSS}, \cite{ASS}. In \cite{BGPS} and \cite{BBG} it was proved that $\mathcal{PES}(\C)$ and $\mathcal{SES}(\C)\setminus\mathcal{PES}(\C)$ are $2^\mathfrak{c}$-algebrable. We will show that they are in fact strongly $2^\mathfrak{c}$-algebrable. 

It is known that any function $f:\R\to\R$ can be represented as algebraic sum of two Darboux functions (see for example \cite{Bruckner}). We can prove a similar result for perfectly everywhere surjective functions. 
\begin{proposition}
Any function $f:\C\to\C$ is an algebraic sum of two perfectly everywhere surjective functions. 
\end{proposition}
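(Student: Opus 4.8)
The plan is to decompose the domain $\C$ into $\mathfrak{c}$ many pairwise disjoint perfect sets, and on each piece arrange that both summands already hit every value of $\C$, while their sum reproduces the prescribed $f$. Concretely, fix a partition $\C=\bigsqcup_{\alpha<\mathfrak c}C_\alpha$ where each $C_\alpha$ is a perfect set; such a partition exists by a standard argument (split $\C$ into $\mathfrak c$ disjoint Bernstein sets, or use that $\C\cong\C\times\C$ and slice). The key point is that every perfect set $P\subseteq\C$ meets $\mathfrak c$ many of the $C_\alpha$ — in fact it is enough that every perfect $P$ contains some $C_\alpha$ entirely, which one can guarantee by choosing the partition so that each $C_\alpha$ is ``small'' (a single Cantor-type set sitting in a basic ball), since every perfect set contains a perfect subset lying in an arbitrarily small ball; alternatively, run a transfinite recursion of length $\mathfrak c$ enumerating all perfect sets and all the $C_\alpha$, ensuring each perfect set acquires a full $C_\alpha$.

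First I would fix, for each $\alpha<\mathfrak c$, a bijection (or merely a surjection) $h_\alpha:C_\alpha\to\C$; this is possible because $|C_\alpha|=\mathfrak c=|\C|$. Define $g_1:\C\to\C$ by $g_1(x)=h_\alpha(x)$ for $x\in C_\alpha$, and define $g_2:\C\to\C$ by $g_2(x)=f(x)-g_1(x)=f(x)-h_\alpha(x)$ for $x\in C_\alpha$. Then $g_1+g_2=f$ by construction, so it remains to check that $g_1,g_2\in\mathcal{PES}(\C)$. For $g_1$: given any perfect set $P$, it contains some $C_\alpha$, and $g_1(C_\alpha)=h_\alpha(C_\alpha)=\C$, so $g_1(P)=\C$. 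For $g_2$: on that same $C_\alpha$ we have $g_2(x)=f(x)-h_\alpha(x)$; since $h_\alpha$ is a bijection of $C_\alpha$ onto $\C$ it is in particular surjective, and I would pick a different index to handle $g_2$ — here it is cleaner to arrange that every perfect set contains \emph{two} of the $C_\alpha$'s, say $C_\alpha$ and $C_\beta$, use $C_\alpha$ for $g_1$ and $C_\beta$ for $g_2$, and on $C_\beta$ choose $h_\beta$ so that the map $x\mapsto f(x)-h_\beta(x)$ is onto $\C$ (for fixed $f|_{C_\beta}$ this is automatic if $h_\beta$ ranges over a bijection with $\C$, since $y\mapsto f(x)-y$ is a bijection of $\C$ for each $x$, hence one can prescribe the values of $g_2$ on $C_\beta$ to be any surjection onto $\C$ and then \emph{define} $h_\beta:=f-g_2$ there).

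The main obstacle is purely a bookkeeping one: making sure the partition $\{C_\alpha\}$ simultaneously exhausts $\C$ and guarantees that every perfect subset of $\C$ contains (at least two) full members $C_\alpha$. I expect to handle this by the transfinite recursion: enumerate all perfect subsets of $\C$ as $\{P_\xi:\xi<\mathfrak c\}$ and build the $C_\alpha$'s so that $P_\xi$ contains $C_{2\xi}$ and $C_{2\xi+1}$, at each stage choosing two disjoint perfect subsets of $P_\xi$ avoiding the (fewer than $\mathfrak c$) points already committed; finally dump any leftover points of $\C$ into $C_0$. Once the partition is in hand, the verification above is routine, and the same scheme applies verbatim to $\R$ in place of $\C$, which recovers the classical Darboux decomposition as a special case since perfectly everywhere surjective functions are in particular Darboux.
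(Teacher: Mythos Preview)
Your construction has a genuine gap, not merely a bookkeeping issue. The partition you want --- $\C=\bigsqcup_\alpha C_\alpha$ with each $C_\alpha$ perfect and every perfect $P$ containing some $C_\alpha$ --- does not exist. Indeed, take any $C_{\alpha_0}$ from such a partition; being perfect, it has a proper perfect subset $Q\subsetneq C_{\alpha_0}$. Then $Q$ is disjoint from every $C_\beta$ with $\beta\neq\alpha_0$ (the pieces are disjoint) and does not contain $C_{\alpha_0}$, so $Q$ contains no member of the partition. Your transfinite recursion fails for the same reason: after a single step you have committed the $\mathfrak c$ points of the perfect set $C_0$, not ``fewer than $\mathfrak c$'' points, and some later $P_\xi$ in the enumeration will be a perfect subset of $C_0$, leaving no room whatsoever for a nonempty $C_{2\xi}\subset P_\xi$ disjoint from $C_0$. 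The alternative you mention (pieces that are Bernstein sets) does not rescue the argument as written either, since then no perfect $P$ \emph{contains} any $C_\alpha$, and surjectivity of $h_\alpha$ on $C_\alpha$ tells you nothing about $g_1(P)$.

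The paper's argument sidesteps this by using a single Bernstein set rather than perfect pieces. Split $\C=B\sqcup(\C\setminus B)$ with $B$ Bernstein; both halves meet every perfect set in $\mathfrak c$ points. One invokes the (previously established) fact that on a Bernstein set $B$ there is a map $\tilde g:B\to\C$ with $\tilde g(P\cap B)=\C$ for every perfect $P$. Take such $\tilde g_1$ on $B$ and $\tilde g_2$ on $\C\setminus B$, then extend by $g_1=f-\tilde g_2$ on $\C\setminus B$ and $g_2=f-\tilde g_1$ on $B$. Then $g_1+g_2=f$, and each $g_i\in\mathcal{PES}(\C)$ because its restriction to the appropriate Bernstein half already maps $P\cap B$ (resp.\ $P\cap(\C\setminus B)$) onto $\C$. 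The moral is that you cannot ask a perfect set to \emph{contain} a piece of your decomposition; you can only ask it to \emph{meet} each piece richly, and that is precisely what a Bernstein decomposition delivers --- but then the surjectivity must be built into $\tilde g_i$ relative to all perfect sets, not just obtained from a single bijection $h_\alpha$.
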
 
\begin{proof}
It was proved in \cite{BGPS} that for every Bernstein set $B\subset\C$ there is $g:B\to\C$ which is perfectly everywhere surjective in the following sense: $g(P\cap B)=\C$ for every perfect set $P$. Let $f:\C\to\C$ and let functions $\tilde{g}_1:B\to\C$, $\tilde{g}_2:\C\setminus B\to\C$ be perfectly everywhere surjective. We extend $\tilde{g}_i$ to $g_i:\C\to\C$ in the following way. Let $g_2(x)=f(x)-\tilde{g}_2(x)$ for $x\in B$ and $g_1(x)=f(x)-\tilde{g}_1(x)$ for $x\in\C\setminus B$. Then $f=g_1+g_2$. 
\end{proof}
The above fact shows that $\mathcal{PES}(\C)$ is far from being linear, since its linear hull contains the whole linear space $\C^\C$. Hence the problem of finding large algebraic structure inside $\mathcal{PES}(\C)\cup\{0\}$ is nontrivial.

By $\mathcal{EDD}(\R)$ we denote the set of all everywhere discontinuous Darboux functions (from $\R$ to $\R$), i.e. nowhere continuous functions which map connected sets onto connected sets.

\begin{theorem}\label{Th2}
The following families of functions are strongly $2^\mathfrak{c}$-algebrable:
\begin{itemize}
\item[(i)] $\mathcal{PES}(\C)$;
\item[(ii)] $\mathcal{SES}(\C)\setminus\mathcal{PES}(\C)$;
\item[(iii)] $\mathcal{EDD}(\R)$. 
\end{itemize}
\end{theorem}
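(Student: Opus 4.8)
The plan is to derive all three parts from the abstract framework of Section~2, specifically Theorems~\ref{main1} and~\ref{main2}, by choosing the right ambient set $X$, the right target set $I$ with nonempty interior, and the right family (or pair of families) $\mathcal F$. In each case the underlying set will be $X=\C$ or $X=\R$, which has cardinality $\kappa=\mathfrak c$, and $\mathfrak c^\omega=\mathfrak c$, so the cardinal arithmetic hypothesis of those theorems is satisfied and we obtain strong $2^{\mathfrak c}$-algebrability exactly as required.

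For part (i), I would take $X=\C$, $I=\C$ (whose interior is all of $\C$, hence nonempty), and $\mathcal F=\mathcal F_{\on{perf}}$, the family of all nonempty perfect subsets of $\C$. There are exactly $\mathfrak c$ perfect subsets of $\C$ and each has cardinality $\mathfrak c$, so $|\mathcal F|\le\mathfrak c=\kappa$ and $|F|=\kappa$ for every $F\in\mathcal F$. By Theorem~\ref{main1} the family $\mathcal{SES}(\C,\mathcal F_{\on{perf}})$ is strongly $2^{\mathfrak c}$-algebrable, and by the identification $\mathcal{SES}(\C,\mathcal F_{\on{perf}})=\mathcal{PES}(\C)$ noted just before the statement, this is exactly (i). For part (iii), I would take $X=\R$, $I=[0,1]$ (nonempty interior), and again $\mathcal F=\mathcal F_{\on{perf}}$, the family of all nonempty perfect subsets of $\R$. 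Theorem~\ref{main1} gives a free algebra $\mathcal A\subset\mathcal{SES}([0,1],\mathcal F_{\on{perf}})\cup\{0\}$ with $2^{\mathfrak c}$ generators. It then remains to check that every nonzero $f\in\mathcal A$ is an everywhere discontinuous Darboux function. As recalled in the paragraph preceding Theorem~\ref{main1}, membership in $\mathcal{SES}([0,1],\mathcal F_{\on{perf}})$ already entails that $f$ is Darboux and that $f\restriction B$ is discontinuous for every uncountable Borel set $B$; in particular $f$ is nowhere continuous (if $f$ were continuous at a point, it would be continuous on an uncountable, even perfect, set around it). So (iii) follows.

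For part (ii) I would invoke Theorem~\ref{main2} with $X=\C$, $I=\C$, $\mathcal F_1=\mathcal F_{\on{perf}}$ and $\mathcal F_2=\mathcal F_{\on{open}}$, the family of all nonempty open subsets of $\C$. Both families have cardinality $\mathfrak c$ (there are $\mathfrak c$ perfect sets and $\mathfrak c$ open sets, the latter since $\C$ is second countable) and every member has cardinality $\mathfrak c$. The key point to verify is the hypothesis that there exists $F_2\in\mathcal F_2$ with $|F_1\setminus F_2|=\mathfrak c$ for \emph{every} $F_1\in\mathcal F_1$: here I would take $F_2$ to be a fixed open disc $D$, say the unit disc, and note that any nonempty perfect set $P$ satisfies $|P\setminus D|=\mathfrak c$. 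This is where a small argument is needed: a perfect set contained in the closed unit disc can have $P\setminus D$ countable (e.g. $P$ could lie entirely in the open disc), so this choice of $F_2$ is in fact \emph{wrong}, and this is the main obstacle. The correct move is to observe that $\mathcal F_{\on{open}}$ is not co-initial with $\mathcal F_{\on{perf}}$ and to pick $F_2$ more cleverly: one wants an open set whose complement contains, in a uniformly large way, a portion of every perfect set — which is impossible for a single open set, so one must instead argue directly. I would therefore modify the approach for (ii): build a free algebra $\mathcal A'\subset\mathcal{PES}(\C)\cup\{0\}$ on a Bernstein set $B\subset\C$ (using Theorem~\ref{main1} with $X=B$, so that $g(P\cap B)=\C$ for every perfect $P$, exactly as in the Proposition preceding Theorem~\ref{Th2}), extend each $f\in\mathcal A'$ by $0$ on $\C\setminus B$, and check that the resulting algebra $\mathcal A$ is free, lies in $\mathcal{PES}(\C)\cup\{0\}$ (each nonzero $f$ maps every perfect set onto $\C$ because it does so already on $P\cap B$), but no nonzero $f\in\mathcal A$ is in $\mathcal{SES}(\C)$, since $f$ vanishes on the nonempty open set $\C\setminus B$... which also fails, because $\C\setminus B$ has empty interior.

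Reconsidering, the honest route for (ii) is the following. Since $\mathcal{SES}(\C)\subsetneq\mathcal{PES}(\C)$, I want functions that are perfectly everywhere surjective but fail to be strongly everywhere surjective, i.e. there is some nonempty open $U$ and some value omitted by $f$ on $U$, or taken fewer than $\mathfrak c$ times. The clean construction: fix a nonempty open $U\subset\C$ and a perfect nowhere dense set $C\subset U$ (a Cantor-type set) with $|C|=\mathfrak c$; decompose $\C$ into $\mathfrak c$ pairwise disjoint Bernstein-type sets $\{B_\xi\}$ each of which meets every perfect set in a set of size $\mathfrak c$, and with the extra property that $U\setminus C$ meets only countably many... this is getting delicate. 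The cleanest published trick, and what I would ultimately write, is: apply Theorem~\ref{main1} on $X=\C$ with $\mathcal F$ equal to $\mathcal F_{\on{perf}}$ \emph{augmented} so as to control behaviour off a chosen open set — concretely, run the proof of Theorem~\ref{main1} but prescribe the generators to be constantly $\psi(1/2)\in I$ on a fixed nonempty relatively open subset $V$ of some perfect set $P_0\subset U$ that is nowhere dense in $\C$, which does not disturb the perfect-surjectivity since every perfect set still meets the "generic" part in size $\mathfrak c$, yet forces every polynomial image to be constant $=P((\psi(1/2))^k)\ne\C$ on $V$, hence on any open set containing $V$; since $V$ has nonempty interior relative to $P_0$ but $P_0$ is nowhere dense, one instead notes $V$ is contained in the \emph{closure} of no open set disjoint from it — so again care is required. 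In the write-up I would simply cite that $\mathcal{SES}(\C)\setminus\mathcal{PES}(\C)$ is handled by Theorem~\ref{main2} applied to the pair $\mathcal F_1=\mathcal F_{\on{perf}}$, $\mathcal F_2=\mathcal F_{\on{open}}$ with $F_2$ a suitable open set whose complement contains a perfect set hitting every perfect set cofinally — deferring the verification of that co-initiality failure to a short lemma — and flag this verification as the one genuinely nontrivial step, the rest being a direct appeal to Theorems~\ref{main1} and~\ref{main2} together with the standard facts $|\mathcal F_{\on{perf}}|=|\mathcal F_{\on{open}}|=\mathfrak c$ and the Bernstein-decomposition techniques of \cite{BGPS}, \cite{BBG}.
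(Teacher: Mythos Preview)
Your treatment of (i) and (iii) is correct and matches the paper's proof exactly: apply Theorem~\ref{main1} with $X=\C$ (resp.\ $\R$), $I=\C$ (resp.\ $[0,1]$), and $\mathcal F=\mathcal F_{\on{perf}}$.

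For (ii), however, there is a genuine error, and it is the sole source of all the difficulties you encounter: you have swapped the roles of $\mathcal F_1$ and $\mathcal F_2$. The set under consideration is
\[
\mathcal{SES}(\C)\setminus\mathcal{PES}(\C)=\mathcal{SES}(\C,\mathcal F_{\on{open}})\setminus\mathcal{SES}(\C,\mathcal F_{\on{perf}}),
\]
so in Theorem~\ref{main2} one must take $\mathcal F_1=\mathcal F_{\on{open}}$ and $\mathcal F_2=\mathcal F_{\on{perf}}$, not the reverse. (The inclusion goes $\mathcal{PES}(\C)\subset\mathcal{SES}(\C)$, so your later attempt to produce ``functions that are perfectly everywhere surjective but fail to be strongly everywhere surjective'' targets the empty set.) With the correct assignment, the hypothesis of Theorem~\ref{main2} asks for a single \emph{perfect} set $F_2$ such that $|U\setminus F_2|=\mathfrak c$ for every nonempty open $U$. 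Any nowhere dense perfect set---e.g.\ a Cantor set in $\C$---does the job immediately, since $U\setminus F_2$ is then nonempty open. This is precisely the paper's two-line argument; no Bernstein sets, no delicate lemma, and no augmented families are needed.
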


\begin{proof}
At first note that $\mathfrak{c}^\omega=\mathfrak{c}$. 

To see (i) note that there is $\mathfrak{c}$ many nonempty perfect sets of cardinality $\mathfrak{c}$. Thus (i) follows from Theorem \ref{main1}.

To prove (ii) fix a perfect nowhere dense set $P\subset\C$. Note that, for any nonempty open set $U$, the set $U\setminus P$ is nonempty and open. Thus, if $\mathcal F_1$ is a family of all nonempty open subsets of $\C$ and $\mathcal F_2$ is a family of all nonempty perfect subsets of $\C$, then by Theorem \ref{main2} we obtain (ii).

To prove (iii) one needs to repeat the construction given in the proof of part (i), but instead of a complex domain one should consider a real domain. 
\end{proof}

%%%%%%%%%%%%%%%%%%%%%%%%%%%%%%%%%%%%%%%%%%%%%%%%%%%%%%%%%%%%%%%%%%%%%%%%%%%%%%%
%    Everywhere surjective functions with respect to $\mathcal F$
%%%%%%%%%%%%%%%%%%%%%%%%%%%%%%%%%%%%%%%%%%%%%%%%%%%%%%%%%%%%%%%%%%%%%%%%%%%%%%%

\subsection{Everywhere surjective functions with respect to $\mathcal F$.} In this section we will apply Theorem \ref{main2} to sharpen Theorem \ref{Th2}(ii). Let  $\mathcal F_{\on{Leb}}$ and $\mathcal F_{\on{Baire}}$ stand for families of all  measurable set of positive Lebesgue measure and all sets with the Baire property of second category, respectively. We start from the following observation about functions from classes $\mathcal{SES}(\C,\mathcal F_{\on{Leb}})$, $\mathcal{SES}(\C,\mathcal F_{\on{Baire}})$ and $\mathcal{PES}(\C)$. 

\begin{proposition}\begin{itemize}
\item[(i)] Let $f\in\mathcal{SES}(\C,\mathcal F_{\on{Leb}})$. Then for any measurable  set $A$ of positive measure the restriction $f\vert_A$ is non-measurable.  
\item[(ii)] Let $f\in\mathcal{SES}(\C,\mathcal F_{\on{Baire}})$. Then for any set $A$ with the Baire property  of second category, the restriction $f\vert_A$ has no Baire property.  
\item[(iii)] Let $f\in\mathcal{PES}(\C)=\mathcal{SES}(\C,\mathcal F_{\on{perf}})$. Then for any uncountable Borel set $A$, the restriction $f\vert_A$ is non-Borel. Moreover the preimages $f^{-1}(\{x\})$ of all singletons are Bernstein sets. 
\end{itemize}
\end{proposition}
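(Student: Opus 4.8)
The plan is to handle (i), (ii) and the first assertion of (iii) by one and the same argument, based on the observation that a measurable (respectively: Baire-measurable, Borel) function is bounded on a ``large'' piece of its domain, whereas every function in the relevant class $\mathcal{SES}(\C,\mathcal F)$ has image all of $\C$ on every member of $\mathcal F$. So in each of the three cases I would argue by contradiction, using the level sets of $f$ at height $n$.

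For (i): assume $A$ is measurable of positive measure and $f\vert_A$ is measurable. For $n\in\N$ set $A_n=\{x\in A:|f(x)|\leq n\}$. Each $A_n$ is measurable (it is $A$ minus the preimage under $f\vert_A$ of the open set $\{z\in\C:|z|>n\}$, and measurable sets form a $\sigma$-algebra), the $A_n$ increase to $A$, hence some $A_n$ has positive measure and thus lies in $\mathcal F_{\on{Leb}}$. Since $f\in\mathcal{SES}(\C,\mathcal F_{\on{Leb}})$ this forces $f(A_n)=\C$, contradicting $f(A_n)\subset\{z\in\C:|z|\leq n\}$. For (ii) the same computation works verbatim, reading ``has the Baire property'' for ``measurable'' and ``of second category'' for ``of positive measure'': sets with the Baire property form a $\sigma$-algebra, and a second-category set cannot be a countable union of meager sets, so some $A_n$ is a second-category set with the Baire property, i.e. $A_n\in\mathcal F_{\on{Baire}}$, and then $f(A_n)=\C\not\subset\{z\in\C:|z|\leq n\}$. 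For the first assertion of (iii): since $\mathcal F_{\on{perf}}$ is co-initial with the family of uncountable Borel sets, $f\in\mathcal{PES}(\C)$ maps every uncountable Borel set onto $\C$; if $A$ is an uncountable Borel set and $f\vert_A$ were Borel, each $A_n$ above is Borel, and as $A=\bigcup_n A_n$ is uncountable some $A_n$ is uncountable, whence again $f(A_n)=\C\not\subset\{z\in\C:|z|\leq n\}$.

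For the ``moreover'' clause of (iii), I would fix $x\in\C$, put $S=f^{-1}(\{x\})$, and recall that $S$ is a Bernstein set precisely when both $S$ and $\C\setminus S$ meet every perfect subset of $\C$ (equivalently, every uncountable closed set). Given a perfect set $P$, the hypothesis $f\in\mathcal{PES}(\C)$ gives $f(P)=\C$; thus $x\in f(P)$ produces a point of $S\cap P$, while $f(P)\neq\{x\}$ produces a point of $(\C\setminus S)\cap P$. Hence $S$ is Bernstein.

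I do not expect a genuine obstacle here. The only things needing care are the routine verifications that the level sets $A_n$ inherit measurability / the Baire property / Borelness from $f\vert_A$ (immediate once one recalls that measurability of a restriction is a statement about preimages of open sets and that the respective classes are closed under countable Boolean operations) and the choice of an index $n$ for which $A_n$ stays ``large'' (continuity of measure from below, the Baire category theorem, and countability of $\N$, respectively). All the content is packed into the defining property of $\mathcal{SES}(\C,\mathcal F)$, namely that $f$ maps every $F\in\mathcal F$ onto all of $\C$.
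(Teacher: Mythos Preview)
Your proof is correct, and for the Bernstein-set clause in (iii) it coincides with the paper's argument. For (i), (ii), and the first assertion of (iii), however, you take a genuinely different and more elementary route. The paper argues via Lusin-type regularity theorems: in (i) it invokes Lusin's theorem to produce a compact $C\subset A$ of positive measure on which $f$ is continuous, whence $f(C)=\C$ contradicts compactness of a continuous image; in (ii) it uses the Baire-category analogue (a $G_\delta$ piece $C\subset A$ with $A\setminus C$ meager and $f\vert_C$ continuous) followed by a localization argument; in (iii) it appeals to the descriptive-set-theoretic fact that a Borel map on an uncountable Borel set is continuous on some perfect subset. Your level-set device $A_n=\{x\in A:|f(x)|\le n\}$ bypasses all three of these structure theorems at once, replacing ``continuous on a large piece'' by the weaker and immediately available ``bounded on a large piece.'' What you gain is uniformity (one argument handles all three cases) and freedom from external input; what the paper's approach offers instead is an explicit connection to the classical regularity theorems, which some readers may find conceptually more informative.
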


\begin{proof}
(i) Let $A$ be a measurable set of positive measure. Suppose that $f\vert_A$ is measurable. Then $A$ contains a compact set $C$ of positive measure such that $f\vert_C$ is continuous. Since $f\in\mathcal{SES}(\C,\mathcal F_{\on{Leb}})$, then $f(C)=\C$ which contradicts continuity of $f\vert_C$.  

(ii) Now, let $A$ be a set with the Baire property of second category and suppose that $f\vert_A$ has the Baire property. Then there is a $G_\delta$ subset $C$ of $A$ such that $f\vert_C$ is continuous and $A\setminus C$ is meager. Put $U=\{x\in\R:(x-\varepsilon,x+\varepsilon)\cap C$ is meager$\}$. Then $U$ is an open set and $C\cap U$ is meager. Let $n\in\N$ and $x\in C\setminus U$. Since $f\in\mathcal{SES}(\C,\mathcal F_{\on{Baire}})$, then $f((x-1/n,x+1/n))=\C$ which contradicts the fact that $f\vert_C$ is continuous. 

(iii) Let $A$ be an uncountable Borel set and suppose that $f\vert_A$ is Borel. Then there is a nonempty perfect set $C\subset A$ such that $f\vert_C$ is continuous. But $f(C)=\C$. A contradiction. 

Note that, if $f\in\mathcal{PES}(\C)$, then $f^{-1}(\{x\})$ and $\C\setminus f^{-1}(\{x\})$ have nonempty intersections with any perfect subset of $\C$. Hence $f^{-1}(\{x\})$ is a Bernstein set. 
\end{proof}

We have the following inclusions between the considered classes of functions:  
$\mathcal{PES}(\C)\subset \mathcal{SES}(\C,\mathcal F_{\on{Leb}})\cap \mathcal{SES}(\C,\mathcal F_{\on{Baire}})$, $\mathcal{SES}(\C,\mathcal F_{\on{Leb}})\subset\mathcal{SES}(\C)$ and $\mathcal{SES}(\C,\mathcal F_{\on{Baire}})\subset\mathcal{SES}(\C)$. Moreover there are no inclusions between $\mathcal{SES}(\C,\mathcal F_{\on{Leb}})$ and $\mathcal{SES}(\C,\mathcal F_{\on{Baire}})$. 
\begin{theorem}\label{non-initial}
The following families of function from $\C$ to $\C$ are strongly $2^\mathfrak{c}$-algebrable: 
\begin{itemize}
\item[(i)] $\mathcal{SES}(\C,\mathcal F_{\on{Leb}})\setminus\mathcal{PES}(\C)$ and $\mathcal{SES}(\C,\mathcal F_{\on{Baire}})\setminus\mathcal{PES}(\C)$;
\item[(ii)]  $\mathcal{SES}(\C,\mathcal F_{\on{Baire}})\setminus\mathcal{SES}(\C,\mathcal F_{\on{Leb}})$ and $\mathcal{SES}(\C,\mathcal F_{\on{Leb}})\setminus\mathcal{SES}(\C,\mathcal F_{\on{Baire}})$;  
\item[(iii)] $\mathcal{SES}(\C)\setminus\mathcal{SES}(\C,\mathcal F_{\on{Leb}})$ and $\mathcal{SES}(\C)\setminus\mathcal{SES}(\C,\mathcal F_{\on{Baire}})$.    
\end{itemize}
\end{theorem}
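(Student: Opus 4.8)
The plan is to obtain each of the six families as an instance of Theorem \ref{main2}, applied with $X=I=\C$ and $\kappa=\mathfrak c$ (for which $\kappa^\omega=\kappa$). Recall that $\mathcal{SES}(\C)=\mathcal{SES}(\C,\mathcal F_{\on{open}})$ and $\mathcal{PES}(\C)=\mathcal{SES}(\C,\mathcal F_{\on{perf}})$, and that $\mathcal F_{\on{open}}$ and $\mathcal F_{\on{perf}}$ are families of cardinality $\mathfrak c$ each of whose members has cardinality $\mathfrak c$, so that they already satisfy the hypotheses of Theorem \ref{main2}. The families $\mathcal F_{\on{Leb}}$ and $\mathcal F_{\on{Baire}}$ have cardinality $2^{\mathfrak c}$, so the first step is to replace them by co-initial subfamilies of size $\mathfrak c$.

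To that end, I would let $\mathcal F_{\on{Leb}}'$ be the family of all compact subsets of $\C$ of positive Lebesgue measure, and let $\mathcal F_{\on{Baire}}'$ be the family of all $G_\delta$ subsets of $\C$ that are comeager in some nonempty open set. By inner regularity every measurable set of positive measure contains a member of $\mathcal F_{\on{Leb}}'$; and every set with the Baire property of second category contains a set $U\setminus M$ with $U$ nonempty open and $M$ a meager $F_\sigma$, which is a $G_\delta$ set comeager in $U$, hence a member of $\mathcal F_{\on{Baire}}'$. Since moreover $\mathcal F_{\on{Leb}}'\subset\mathcal F_{\on{Leb}}$ and $\mathcal F_{\on{Baire}}'\subset\mathcal F_{\on{Baire}}$, the pairs $\mathcal F_{\on{Leb}}',\mathcal F_{\on{Leb}}$ and $\mathcal F_{\on{Baire}}',\mathcal F_{\on{Baire}}$ are mutually co-initial, so $\mathcal{SES}(\C,\mathcal F_{\on{Leb}})=\mathcal{SES}(\C,\mathcal F_{\on{Leb}}')$ and $\mathcal{SES}(\C,\mathcal F_{\on{Baire}})=\mathcal{SES}(\C,\mathcal F_{\on{Baire}}')$. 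Both primed families consist of Borel sets of bounded complexity, hence have cardinality $\mathfrak c$, and every member of either is an uncountable Borel set, hence of cardinality $\mathfrak c$. Thus the primed families satisfy the hypotheses of Theorem \ref{main2}.

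It then remains, for each of the six differences $\mathcal{SES}(\C,\mathcal F_1)\setminus\mathcal{SES}(\C,\mathcal F_2)$, with $\mathcal F_1\in\{\mathcal F_{\on{open}},\mathcal F_{\on{Leb}}',\mathcal F_{\on{Baire}}'\}$ and $\mathcal F_2\in\{\mathcal F_{\on{perf}},\mathcal F_{\on{Leb}}',\mathcal F_{\on{Baire}}'\}$ as rewritten above, to produce a single set $F_2\in\mathcal F_2$ with $|F_1\setminus F_2|=\mathfrak c$ for every $F_1\in\mathcal F_1$ and to quote Theorem \ref{main2}. Whenever $\mathcal F_2\in\{\mathcal F_{\on{perf}},\mathcal F_{\on{Baire}}'\}$ I would choose $F_2$ to be Lebesgue-null: the square $C\times C$ of the Cantor set works for $\mathcal F_2=\mathcal F_{\on{perf}}$ (it is perfect, nowhere dense and null), and a dense $G_\delta$ subset of $\C$ of measure zero works for $\mathcal F_2=\mathcal F_{\on{Baire}}'$. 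Whenever $\mathcal F_2\in\{\mathcal F_{\on{perf}},\mathcal F_{\on{Leb}}'\}$ I would choose $F_2$ to be nowhere dense: $C\times C$ again works for $\mathcal F_2=\mathcal F_{\on{perf}}$, and $K\times[0,1]$ with $K\subset[0,1]$ a fat Cantor set works for $\mathcal F_2=\mathcal F_{\on{Leb}}'$ (it is compact, nowhere dense and of positive measure). One checks, case by case on the six pairs, that a null $F_2$ suffices when $\mathcal F_1\in\{\mathcal F_{\on{open}},\mathcal F_{\on{Leb}}'\}$ (then $F_1\setminus F_2$ has positive measure, hence cardinality $\mathfrak c$) and that a nowhere dense, hence meager, $F_2$ suffices when $\mathcal F_1\in\{\mathcal F_{\on{open}},\mathcal F_{\on{Baire}}'\}$ (then $F_1\setminus F_2$ is comeager in a nonempty open set, hence of cardinality $\mathfrak c$); in each of the six pairs at least one of these two situations is available.

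I expect the only genuine obstacle to be this cardinality bookkeeping: exhibiting co-initial subfamilies of $\mathcal F_{\on{Leb}}$ and $\mathcal F_{\on{Baire}}$ of the correct cardinality that leave the classes $\mathcal{SES}(\C,\cdot)$ unchanged. Once that is in place, the choice of the witness sets $F_2$ and the application of Theorem \ref{main2} are routine.
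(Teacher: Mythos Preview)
Your approach is essentially the paper's: apply Theorem \ref{main2} with a suitable witness $F_2$ for each pair. The paper uses the Cantor set for (i) and a decomposition $\C=A\cup B$ into a null set $A$ and a meager set $B$ for (ii) and (iii); your choices ($C\times C$, a fat-Cantor-times-interval set, a dense null $G_\delta$) play the same role.

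The one genuine difference is your cardinality bookkeeping, and here you are in fact more careful than the paper. Theorem \ref{main2} as stated requires $|\mathcal F_i|=\kappa$, but $|\mathcal F_{\on{Leb}}|=|\mathcal F_{\on{Baire}}|=2^{\mathfrak c}$; the paper's proof simply asserts that ``the respective families of sets fulfill the assumptions of Theorem \ref{main2}'' without addressing this. Your passage to mutually co-initial Borel subfamilies $\mathcal F_{\on{Leb}}'$, $\mathcal F_{\on{Baire}}'$ of size $\mathfrak c$ is exactly what is needed to make the application of Theorem \ref{main2} literally correct. (Alternatively one could observe, looking at the proof of Theorem \ref{main2}, that only $|\mathcal F_1|\le\kappa$ is actually used and nothing about $|\mathcal F_2|$ is needed---but even then one still has to shrink $\mathcal F_1$ in four of the six cases, which is precisely your co-initial step.) So your ``only genuine obstacle'' is real, and your resolution of it is correct.
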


\begin{proof}
We need only to prove that the respective families of sets fulfill the assumptions of Theorem \ref{main2}. To prove (i) note that the ternary Cantor set witnesses that $\mathcal F_{\on{Leb}}$ and $\mathcal F_{\on{Baire}}$ are not co-initial with $\mathcal F_{\on{perf}}$. To prove (ii) let $A$ and $B$ be such that $A\cap B=\emptyset$, $A\cup B=\C$, $A$ is null and $B$ is meager. Then $A$ witnesses that $\mathcal F_{\on{Leb}}$ is not co-initial with $\mathcal F_{\on{Baire}}$ while $B$ witnesses that $\mathcal F_{\on{Baire}}$ is not co-initial with $\mathcal F_{\on{Leb}}$. To see (iii) note that neither $A$ nor $B$ have nonempty interiors. Thus $A$ witnesses that $\mathcal F_{\on{open}}$ is not co-initial with $\mathcal F_{\on{Baire}}$ while $B$ witnesses that $\mathcal F_{\on{open}}$ is not co-initial with $\mathcal F_{\on{Leb}}$.   
\end{proof}

%%%%%%%%%%%%%%%%%%%%%%%%%%%%%%%%%%%%%%%%%%%%%%%%%%%%%%%%%%%%%%%%%%
%		Measurable strongly everywhere surjective functions
%%%%%%%%%%%%%%%%%%%%%%%%%%%%%%%%%%%%%%%%%%%%%%%%%%%%%%%%%%%%%%%%%%

\subsection{Measurable strongly everywhere surjective functions.}

A function $f:\R\to\R$ is called \emph{Sierpi\'nski function} if any perfect set $P\subset\R$ contains a perfect set $Q\subset P$ such that $f\vert_Q$ is continuous. Such functions were first considered by Sierpi\'nski in \cite{S}. Marczewski proved in \cite{M} that Sierpi\'nski functions are exactly  measurable functions with respect to the Marczewski $\sigma$-algebra $s(\R)$ given by
$$
s(\R)=\{A\subset\R: \forall P\in\mathcal{P}\exists Q\subset P,Q\in\mathcal{P}(Q\subset A\cap P\mbox{ or }Q\subset P\setminus A)\},
$$ 
where $\mathcal{P}$ stands for the family of all nonempty perfect subsets of reals. The family of hereditary subsets of $s(\R)$ forms a $\sigma$-ideal of the form
$$
s_0(\R)=\{A\subset\R: \forall P\in\mathcal{P}\exists Q\subset P,Q\in\mathcal{P}(Q\subset P\setminus A)\}.
$$
One can consider $s(X)$ and $s_0(X)$ for any Polish space $X$. Then $s(X)$ and $s_0(X)$ are a $\sigma$-algebra and a $\sigma$-ideal of subsets of $X$.    

It is well-known that Bernstein sets are not measurable and do not have the Baire property. Also they do not belong to $s(\C)$. Thus perfectly everywhere surjective functions are not $s(\C)$-measurable.  The following result sharpens Theorem \ref{non-initial}(iii) and consequently, it is a strengthening of Theorem \ref{Th2}(ii).

\begin{theorem}
By $\mathcal G$ denote the set of all functions $f:\C\to\C$ which fulfill the following conditions: 
\begin{itemize}
\item[(a)] $f$ is strongly everywhere surjective;
\item[(b)] for any perfect set $S$ there is a perfect set $S'\subset S$ with $f\vert_{S'}=0$, in particular $f$ is $s(\C)$-measurable;
\item[(c)] $f$ is Lebesgue measurable and $f$ has the Baire property. 
\end{itemize}
Then $\mathcal{G}$ is strongly $2^\mathfrak{c}$-algebrable. 
\end{theorem}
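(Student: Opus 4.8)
The plan is to concentrate all the pathology of the members of the algebra on a single thin set and let them vanish outside it; the argument then rests on one lemma. \emph{Lemma:} there is a set $N\subseteq\C$ which is Lebesgue null, meager, a member of the Marczewski $\sigma$-ideal $s_0(\C)$, and satisfies $|U\cap N|=\mathfrak{c}$ for every nonempty open $U\subseteq\C$. I regard the construction of $N$ as the main obstacle and treat it last.

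Granting the lemma, here is how I would finish. Since $\mathfrak{c}^\omega=\mathfrak{c}$, I would apply Theorem \ref{main1} with $X=N$, $\kappa=\mathfrak{c}$, $I=\C$ (nonempty interior, cardinality $\mathfrak{c}$) and $\mathcal F=\{U\cap N:U\subseteq\C\text{ nonempty open}\}$; by the lemma $|\mathcal F|\le\mathfrak{c}$ and $|F|=\mathfrak{c}$ for all $F\in\mathcal F$, so the hypotheses hold. This yields a free algebra $\mathcal A'$ with $2^{\mathfrak c}$ free generators, contained in $\mathcal{SES}(\C,\mathcal F)\cup\{0\}$, consisting of functions $N\to\C$; and because $I=\C$ and every nonzero polynomial $P$ in $k$ variables without constant term is nonconstant with $P(\C^k)=\C$ (fix all but one variable generically and use the fundamental theorem of algebra), each nonzero $g\in\mathcal A'$ maps every $F\in\mathcal F$ onto $\C$ with all fibres of size $|N|=\mathfrak{c}$. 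Next I would extend by zero: for $g\in\mathcal A'$ let $\widehat g\colon\C\to\C$ agree with $g$ on $N$ and be $0$ off $N$, and put $\mathcal A=\{\widehat g:g\in\mathcal A'\}$. Since the algebra operations are pointwise, all activity lies on $N$, and a polynomial without constant term sends the zero functions to the zero function, one has $P(\widehat g_1,\dots,\widehat g_k)=\widehat{P(g_1,\dots,g_k)}$, so $g\mapsto\widehat g$ is an injective algebra homomorphism and $\mathcal A$ is again free with $2^{\mathfrak c}$ generators.

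Then I would verify $\mathcal A\setminus\{0\}\subseteq\mathcal G$ for each nonzero $f=\widehat g$. For (c): if $A\subseteq\C$ is Borel with $0\notin A$ then $f^{-1}(A)\subseteq N$ is null and has the Baire property, and if $0\in A$ then $\C\setminus f^{-1}(A)\subseteq N$ is null and meager; hence $f$ is Lebesgue measurable and has the Baire property. For (b): given a nonempty perfect $S\subseteq\C$, membership $N\in s_0(\C)$ gives a nonempty perfect $S'\subseteq S$ with $S'\cap N=\emptyset$, so $f|_{S'}=0$; as $f$ is then continuous on a perfect subset of every perfect set, $f$ is $s(\C)$-measurable by Marczewski's theorem (see \cite{M}, applied to the Polish space $\C$). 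For (a): for nonempty open $U$, every point of $U\setminus N$ is a zero of $f$ and $|U\setminus N|=\mathfrak{c}$ since $N$ has measure zero, while for $y\in\C$ the fact that $g\in\mathcal{SES}(\C,\mathcal F)$ and $U\cap N\in\mathcal F$ gives $|\{x\in U\cap N:g(x)=y\}|=|N|=\mathfrak{c}$; as $f$ vanishes off $N$, $f$ attains every value of $\C$ exactly $\mathfrak{c}$ times on $U$, i.e. $f$ is strongly everywhere surjective. Thus $\mathcal A$ witnesses the strong $2^{\mathfrak c}$-algebrability of $\mathcal G$.

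It remains to produce $N$, which is where the real work is. I would fix pairwise disjoint closed nowhere dense null sets $C_n\subseteq\C$ ($n\in\omega$), each homeomorphic to the Cantor set, arranged so that every nonempty open $U$ contains some $C_n$ (put a disjoint ``thin'' Cantor set into each ball of a countable base); then $\bigcup_nC_n$ is null and meager. Inside each $C_n$ I would take, by the classical transfinite recursion over the (at most $\mathfrak{c}$ many) perfect subsets of $C_n$, a set $B_n\subseteq C_n$ with $|B_n|=\mathfrak{c}$ and $B_n\in s_0(C_n)$, i.e. such that every perfect $P\subseteq C_n$ has a perfect subset disjoint from $B_n$; at each stage one reserves a perfect subset of the listed perfect set for the complement of $B_n$ and adds a new point to $B_n$. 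This is the step I expect to be the most delicate — it is essentially the classical fact that a perfect Polish space carries an $s_0$-set of cardinality $\mathfrak{c}$. Setting $N=\bigcup_nB_n$, it is null and meager (being inside $\bigcup_nC_n$) and $|U\cap N|\ge|B_n|=\mathfrak{c}$ whenever $C_n\subseteq U$, so $N$ meets every nonempty open set in cardinality $\mathfrak{c}$. Finally, for $N\in s_0(\C)$: given a nonempty perfect $S\subseteq\C$, either $S\cap\bigcup_nC_n$ is meager in $S$, in which case $S\setminus\bigcup_nC_n$ contains a perfect set disjoint from $N$, or else, by the Baire category theorem, some $C_{n_0}\cap S$ has nonempty interior in $S$, giving an open $W$ with $\emptyset\ne S\cap W\subseteq C_{n_0}$ perfect, and then $B_{n_0}\in s_0(C_{n_0})$ yields a perfect $S'\subseteq S\cap W$ with $S'\cap B_{n_0}=\emptyset$, which misses $N$ since $S'\subseteq C_{n_0}$ is disjoint from every $C_m$ with $m\ne n_0$.
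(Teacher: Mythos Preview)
Your proof is correct and follows essentially the same route as the paper: build a thin support set that is null, meager, in $s_0(\C)$, and meets every nonempty open set in cardinality $\mathfrak{c}$; apply Theorem~\ref{main1} on that set; then extend by zero and verify (a)--(c). The only cosmetic differences are that the paper takes $\mathcal F=\{D_n:n\in\N\}$ (one $s_0$-piece per basic open set) rather than your $\mathcal F=\{U\cap N:U\text{ open}\}$, and the paper dispatches the $s_0$-property of the union in one line by citing that $s_0(\C)$ is a $\sigma$-ideal, whereas you supply the Baire-category dichotomy and the transfinite construction of an $s_0$-set of size $\mathfrak{c}$ explicitly.
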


\begin{proof}
Let $\{V_n:n\in\N\}$ be a basis of open sets in $\C$. Consider a family $\{C_n:n\in\N\}$ of pairwise disjoint nowhere dense perfect and Lebesgue null sets such that $C_n\subset V_n$ for $n\in\N$. For any $n\in\N$ fix an $s_0(\C)$ set $D_n\subset C_n$ of cardinality $\mathfrak{c}$. Let $\mathcal F=\{D_{n}:n\in\N\}$ and let $\mathcal A'\subset\mathcal{SES}(\C,\mathcal F)$ be a free linear algebra of surjective functions from $D:=\bigcup_{n\in\N}D_n$ onto $\C$ generated by $f'_{\xi}$, $\xi<2^\mathfrak{c}$. Let $\mathcal A$  be the free  linear algebra generated by functions
$$
f_\xi(x)=
\left\{\begin{array}{ll}
f'_\xi(x), & \textrm{ if }x\in D,\\
0, & \textrm{ if }x\notin D.
\end{array} \right.
$$
Since $s_0(\C)$ is a $\sigma$-ideal, then any perfect set $S$ has a perfect subset $S'$ disjoint from every $D_n$. Moreover, functions from $\mathcal A$ takes every value $\mathfrak{c}$ many times on $D_n$.  
\end{proof}

Let $\mathcal{G}_1$ be the family of all functions from $\mathcal{SES}(\C)$ which have the Baire property but are neither measurable nor $s$-measurable. Let $\mathcal{G}_2$ be the family of all functions from $\mathcal{SES}(\C)$ which are measurable but neither have the Baire property nor are $s(\C)$-measurable.

\begin{theorem}
The families $\mathcal G_1$ and $\mathcal G_2$ are strongly $2^\mathfrak{c}$-algebrable. 
\end{theorem}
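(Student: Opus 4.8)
The plan is to imitate the proof of the previous theorem (the one constructing $\mathcal G$) but to split the domain into two halves so that the algebra witnesses failure of exactly one regularity property while still having the other.

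First I would fix, as in the decomposition used in Theorem~\ref{non-initial}, a partition $\C=A\sqcup B$ with $A$ Lebesgue null and $B$ meager. For $\mathcal G_1$ I work inside $B$: since $B$ is meager, any subset of $B$ is meager, hence Lebesgue measurable automatically fails to be useful to us --- wait, rather the point is that a function supported on $B$ is null on the comeager set $\C\setminus B$, so it is \emph{not} Lebesgue measurable? No: a function supported on a null set \emph{is} measurable. So the correct split is the reverse of what one first guesses. For $\mathcal G_1$ (Baire property, but not measurable, not $s$-measurable) I want the function to have the Baire property, so it should be supported on a meager set $B$; then it has the Baire property trivially; to make it non-measurable and non-$s$-measurable I must ensure the function is strongly everywhere surjective witnessed on sets that are \emph{not} null and \emph{not} in $s(\C)$. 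Thus inside $B$ I choose, for each basic open $V_n$, a Bernstein-type set $E_n\subset V_n\cap B$ of positive outer measure (this is possible because $V_n\cap B$ is a second-category set with the Baire property, hence contains a set of full outer measure in a positive-measure set) with $E_n\notin s(\C)$, and I apply Theorem~\ref{main1} with $\mathcal F=\{E_n:n\in\N\}$ and $I=\C$ to get a free algebra of surjections $D:=\bigcup_n E_n\to\C$, extended by $0$ off $D$. Every nonzero element is strongly everywhere surjective (it hits every value $\mathfrak c$ times on each $E_n\subset V_n$), it has the Baire property (supported on the meager set $B$), it is not Lebesgue measurable (it is non-constant and finite-to-one-ish on a set of positive outer measure, more precisely its restriction to a suitable positive-measure set is everywhere surjective, contradicting Lusin's theorem exactly as in the Proposition preceding Theorem~\ref{non-initial}), and it is not $s(\C)$-measurable (each $E_n$ was chosen to meet every perfect set in both $E_n$ and its complement, so the preimage of a value is not in $s(\C)$).

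Symmetrically, for $\mathcal G_2$ (measurable, but no Baire property, not $s$-measurable) I support the function on the null set $A$: choose for each $V_n$ a set $E_n\subset V_n\cap A$ which is a Bernstein subset of a perfect subset of $V_n\cap A$ and which has the Baire property failing (second category in no interval, but its preimage-of-a-point structure is Bernstein). Since $A$ is null, every function supported on it is Lebesgue measurable; strong everywhere surjectivity is again arranged by Theorem~\ref{main1} over $D=\bigcup_n E_n$; failure of the Baire property follows because the restriction to a second-category set with the Baire property would be continuous on a comeager-in-it $G_\delta$ while being everywhere surjective there (same contradiction as part (ii) of the Proposition preceding Theorem~\ref{non-initial}), provided the $E_n$ are chosen so that $D$ genuinely meets some such set in a way forcing surjectivity; and failure of $s(\C)$-measurability is built into the Bernstein choice. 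Finally one checks freeness of the extended algebra exactly as before: the generators restricted to $D$ are free generators by Theorem~\ref{main1}, and extending by $0$ off $D$ preserves freeness since $P(f_{\xi_1},\dots,f_{\xi_k})$ vanishes on $\C\setminus D$ but is non-zero on $D$ (because $P$ is non-constant on $\C^k$).

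The main obstacle is the careful simultaneous arrangement of the witnessing sets $E_n$: they must live inside $V_n\cap A$ (resp. $V_n\cap B$), have cardinality $\mathfrak c$ so that Theorem~\ref{main1} applies, and additionally be ``bad'' in precisely the two senses required (large outer measure or second category, and not in $s(\C)$) while automatically inheriting the one ``good'' property from the ambient null or meager set. The construction of such $E_n$ is a standard but slightly delicate transfinite recursion of Bernstein type carried out inside $V_n\cap A$ (respectively $V_n\cap B$), enumerating the $\mathfrak c$ perfect sets and, for $\mathcal G_2$, also enumerating a cofinal family of sets of positive measure to be avoided; I expect this bookkeeping, rather than the algebraic part, to be where the real work lies. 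Everything else --- surjectivity, freeness, the ``good'' regularity property --- is immediate from Theorem~\ref{main1} and the fact that a countable union of null (resp. meager) sets is null (resp. meager).
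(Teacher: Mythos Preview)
Your overall framework---split $\C=A\sqcup B$ with $A$ null and $B$ meager, support the algebra on the appropriate side to obtain one regularity property for free, and invoke Theorem~\ref{main1}---matches the paper exactly. The divergence, and the source of a genuine gap, is your choice of the family $\mathcal F$.

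You propose a \emph{countable} family $\mathcal F=\{E_n:n\in\N\}$ of hand-built ``Bernstein-type'' sets $E_n\subset V_n\cap B$. But membership in $\mathcal{SES}(\C,\mathcal F)$ only guarantees that each nonzero $f$ in the algebra maps every $E_n$ onto $\C$; it says nothing about $f$ on an arbitrary compact set $K$ of positive measure or an arbitrary perfect set $P$. Your Lusin argument needs $f\vert_K$ discontinuous for \emph{every} such $K$, yet $K$ need not contain any $E_n$, nor need $f(K\cap E_n)$ be large even when the intersection is nonempty. The same obstruction blocks the non-$s$-measurability claim: knowing that $E_n$ is Bernstein tells you $P\cap E_n\neq\emptyset$, not that $f(P\cap E_n)=\C$. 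No amount of bookkeeping in the transfinite construction of the individual $E_n$ repairs this, because the defect is that $\mathcal F$ is too sparse, not that its members are poorly chosen.

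The paper's remedy is to take $\mathcal F$ to be the family of \emph{all} perfect subsets of the meager full-measure set (call it $A$), a family of size $\mathfrak c$, and apply Theorem~\ref{main1} on $A$, extending by $0$ on $B$. Then every nonzero $f$ in the algebra maps \emph{every} perfect subset of $A$ onto $\C$. Since each compact set of positive measure, each nonempty open set, and each perfect set contains a perfect subset of $A$ (because $A$ is Borel of full measure), non-measurability, strong everywhere surjectivity, and non-$s(\C)$-measurability all follow immediately, with no Bernstein construction needed; the Baire property comes from $f\vert_B\equiv 0$. For $\mathcal G_2$ one swaps $A$ and $B$. The moral: enlarge $\mathcal F$ rather than the complexity of its members.
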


\begin{proof}
Let $A,B\subset\C$ be such that $A\cap B=\emptyset$, $A\cup B=\C$, $A$ is of the first category and $B$ is null. 

Enumerate all perfect subsets of $A$ as $\mathcal F=\{P_\alpha:\alpha<\mathfrak{c}\}$, and let $\mathcal A\subset\mathcal{SES}(\C,\mathcal F)$. Therefore, for every $f\in\mathcal A$, the preimages $f^{-1}(\{z\})$, $z\in\C$, are Bernstein sets in $A$. Hence $f$ is not $s(\C)$-measurable. Since $A$ is of full measure, there is no set of positive measure on which $f$ is continuous. Thus $f$ is non-measurable. For any open nonempty set $U\subset\C$, there is a perfect set $P\subset U\cap A$. Since every perfect set has $\mathfrak{c}$ many pairwise disjoint perfect subsets $P_\alpha\subset P$, $\alpha<\mathfrak{c}$, and $f$ maps each $P_\alpha$ onto $\C$, then $f$ is strongly everywhere surjective. Finally, since $f$ is constant on a comeager set $B$, then it has the Baire property. 

To prove the remaining part of the assertion, replace $A$ by $B$ in the above proof.
\end{proof}

We end our considerations on strongly everywhere surjective functions with respect to family $\mathcal F$ with remark on everywhere surjective functions. 
If in the definition of $\mathcal{SES}(\mathcal F)$ we do not assume that nonempty preimages of singletons have maximal cardinality, that is $|\{x\in F:f(x)=y\}|=|X|$ for any $y\in f(F)$ and $F\in\mathcal F$, then we will obtain the class $\mathcal{ES}(\mathcal F)$ of everywhere surjective functions with respect to $\mathcal F$. Note that $\mathcal{ES}(\C,\mathcal F_{\on{perf}})=\mathcal{SES}(\C,\mathcal F_{\on{perf}})=\mathcal{PES}$ since any perfect set $P$ contains $\mathfrak{c}$ many pairwise disjoint nonempty perfect subsets. On the other hand, $\mathcal{SES}=\mathcal{SES}(\C,\mathcal F_{\on{open}})\subsetneq\mathcal{ES}(\C,\mathcal F_{\on{open}})$. Moreover $\mathcal{ES}\setminus\mathcal{SES}$ is $\mathfrak{c}$-lineable \cite{GMMFSSS}. It is an open problem if $\mathcal{ES}\setminus\mathcal{SES}$ is $2^\mathfrak{c}$ -lineable. In general, one can ask if $\mathcal{ES}(\mathcal F)\setminus\mathcal{SES}(\mathcal F)$ is $2^\mathfrak{c}$-lineable or (strongly) $2^\mathfrak{c}$-algebrable under some reasonable assumptions on $\mathcal F$.

%%%%%%%%%%%%%%%%%%%%%%%%%%%%%%%%%%%%%%%%%%%%%%%%%%%%%%%%%%%%%%%%%%%%%%%%%%%%
%			Functions whose sets of continuity points is a fixed G_\delta set
%%%%%%%%%%%%%%%%%%%%%%%%%%%%%%%%%%%%%%%%%%%%%%%%%%%%%%%%%%%%%%%%%%%%%%%%%%%%

\subsection{Functions whose sets of continuity points is a fixed $G_\delta$ set.}

It is well-known that the set of continuity points of an arbitrary function $f:\R\to\R$ is a $G_\delta$-set. Several authors considered lineability and coneability of families of functions with prescribed sets of discontinuity points. Garc\'ia-Pacheco, Palmberg and Seoane-Sep\'ulveda in \cite[Theorem 5.1]{GPPSS} proved the $\omega$-lineability of the set of functions with finitely many points of continuity. 
Aizpuru,  P\'erez-Eslava, Garc\'ia-Pacheco and Seoane-Sep\'ulveda established in \cite{APGS} that the set of all functions $f:\R \rightarrow \R$ which are continuous only at the points of a fixed open set $U$ (a fixed $G_\delta$ set, respectively) is lineable (coneable). Recently, Bartoszewicz, Bienias and G\l\c{a}b established $2^\mathfrak{c}$-algebrability of the family of functions whose sets of continuity points equal to $K$ for a fixed closed set $K\subsetneq\R$ (or $K\subsetneq\C$) \cite{BBG}. In Theorem \ref{theorem_algebrability}, we will give a condition equivalent for strong $2^\mathfrak{c}$-algebrability  of the family of functions whose sets of continuity points equal to a fixed $G_\delta$ set. 

All the sets below will be subsets of $\R$. We say that a nonempty set $A$ is $\mathfrak{c}$-dense in itself if either $A\cap I=\emptyset$ or $\vert A\cap I\vert=\mathfrak{c}$ for every open interval $I$. Note that any $\mathfrak{c}$-dense in itself set is dense in itself. Observe that a nonempty set $A$ is dense in itself if either $A\cap I=\emptyset$ or $\vert A\cap I\vert\geq\omega$ for every open interval $I$. This shows that being $\mathfrak{c}$-dense in itself is a natural strengthening of being dense in itself. However these two notions coincide in the class of closed sets with the class of nonempty perfect sets. It follows directly from the definition of $\mathfrak{c}$-dense in itself set that if $A$ is $\mathfrak{c}$-dense in itself and $C$ is closed, then $A\setminus C$ is $\mathfrak{c}$-dense in itself.

\begin{lemma}\label{isolated_points}
Let $F$ be a Borel $\mathfrak{c}$-dense in itself set. Suppose that $x\in F$ and $I$ is an open neighbourhood of $x$. Then there is a nonempty perfect set $H\subset F\cap I$ containing $x$.  
\end{lemma}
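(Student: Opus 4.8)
The statement to prove is Lemma~\ref{isolated_points}: if $F$ is a Borel $\mathfrak{c}$-dense in itself set, $x\in F$, and $I$ is an open neighbourhood of $x$, then there is a nonempty perfect set $H\subset F\cap I$ with $x\in H$. The strategy is to build a perfect set inside $F\cap I$ accumulating at $x$ by a Cantor-scheme construction, where the $\mathfrak{c}$-density hypothesis guarantees at every stage that the pieces of $F$ we want to split are uncountable, and the Borel hypothesis lets us invoke the perfect set property to perform the splitting.

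\emph{Step 1: Reduce to finding a perfect subset with $x$ as a limit.} It suffices to produce, for every $n$, a nonempty perfect set $H_n\subset F\cap\bigl(x-\tfrac1n,x+\tfrac1n\bigr)\subset F\cap I$ with $H_n$ not containing $x$ (say $H_n$ lies in a closed interval not containing $x$ but inside the $\tfrac1n$-ball), with the $H_n$ pairwise disjoint; then $H:=\{x\}\cup\bigcup_n H_n$ is perfect: it is closed because the $H_n$ shrink to $x$, and it has no isolated points since $x$ is a limit of points of the $H_n$ and each $H_n$ is perfect. So the real content is: for any open interval $J$ with $x\in\overline J\setminus J$ sufficiently close to $x$ and $F\cap J\neq\emptyset$, $F\cap J$ contains a nonempty perfect set. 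Because $F$ is $\mathfrak{c}$-dense in itself and $x\in F$, every open interval around $x$ meets $F$ in a set of size $\mathfrak{c}$; in particular we can pick disjoint closed intervals $J_n$ with $x\notin J_n$, $J_n\to x$, and $|F\cap J_n|=\mathfrak{c}$ (if $\mathrm{int}(J_n)$ meets $F$ it does so in $\mathfrak{c}$ points, and we can always arrange this by shrinking toward a point of $F$ near $x$, which exists since $x$ is not isolated in $F$ — a consequence of $\mathfrak{c}$-density).

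\emph{Step 2: Extract a perfect set from each $F\cap J_n$.} Here $F$ Borel implies $F\cap J_n$ is Borel, hence analytic; since $|F\cap J_n|=\mathfrak{c}>\aleph_0$, the perfect set property for analytic (even Borel) sets yields a nonempty perfect set $H_n\subset F\cap J_n$. This is the key classical input. Then apply Step~1.

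\emph{Main obstacle.} The genuinely delicate point is the reduction in Step~1 — making sure we can choose the intervals $J_n$ shrinking to $x$ so that each one still catches $\mathfrak{c}$ (equivalently, infinitely many, hence uncountably many by $\mathfrak{c}$-density) points of $F$. This uses that $x$ is a limit point of $F$: if some interval $(x-\varepsilon,x+\varepsilon)$ had $x$ as an isolated point of $F$, then taking $I'=(x-\varepsilon,x+\varepsilon)$ we would get $F\cap I'=\{x\}$, which is finite but nonempty, contradicting $\mathfrak{c}$-density (which forces $|F\cap I'|=\mathfrak{c}$). So $x$ is a limit of $F$, every small interval around $x$ meets $F\setminus\{x\}$, and splitting such a neighbourhood into a sequence of disjoint closed intervals approaching $x$, infinitely many of them meet $F$; by $\mathfrak{c}$-density each such one meets $F$ in $\mathfrak{c}$ points. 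The rest (perfect set property, checking $H$ is perfect) is routine.
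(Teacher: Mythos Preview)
Your proof is correct and follows essentially the same approach as the paper's: use $\mathfrak{c}$-density to see that $x$ is not isolated in $F$, pick a sequence in $F\setminus\{x\}$ converging to $x$, surround its terms by pairwise disjoint intervals each meeting $F$ in $\mathfrak{c}$ points, apply the perfect set theorem for Borel sets to extract Cantor sets $H_n$, and put $H=\{x\}\cup\bigcup_n H_n$. The paper's argument is the same but terser, omitting the discussion of why $x$ is non-isolated and why $H$ is closed that you spell out.
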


\begin{proof}
Since $F$ is dense in itself, there is a sequence $(x_n)$ tending to $x$. We can find pairwise disjoint open intervals $I_n$ with $x_n\in I_n$, $n\in\N$. Since $F$ is $\mathfrak{c}$-dense in itself, then $|F\cap I_n|=\mathfrak{c}$. Hence the set  $F\cap I_n$ contains a subset $H_n$ homeomorphic to the Cantor set.  Let $H=\{x\}\cup\bigcup_{n} H_n$. Then $H$ is perfect. 
\end{proof}

\begin{lemma}\label{F_sigma-c-dense}
Let $F$ be a non-closed $\mathfrak{c}$-dense in itself $F_\sigma$ set. Then there is a sequence $C_1\subsetneq C_2\subsetneq C_3\subsetneq\dots$ of perfect sets such that $F=\bigcup_{n}C_n$. 
\end{lemma}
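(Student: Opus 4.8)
The plan is to write $F$ as an increasing union of closed sets and then repair each of them so that it becomes perfect while remaining inside $F$, and so that the inclusions become strict and still exhaust $F$. Since $F$ is $F_\sigma$, write $F=\bigcup_n K_n$ with $K_1\subset K_2\subset\dots$ closed (replacing the given closed sets by their finite unions). The key point is that each $K_n$, being a closed subset of the dense-in-itself set $F$, can be purged of its isolated points without losing much: if $x$ is isolated in $K_n$, then $x\in F$ and, by Lemma \ref{isolated_points}, there is a nonempty perfect set $H_x\subset F$ with $x\in H_x$ contained in a small neighbourhood of $x$; moreover we may take the neighbourhoods so small that the sets $H_x$ attached to distinct isolated points of $K_n$ are pairwise disjoint and disjoint from the (closed) derived set $K_n'$. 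Then $C_n':=K_n'\cup\bigcup\{H_x: x \text{ isolated in } K_n\}$ is a perfect subset of $F$ containing $K_n$; here one checks that adding countably many ``hairs'' $H_x$ clustering only at points of $K_n'$ keeps the set closed, and that it has no isolated points because $K_n'$ is perfect-or-empty after the same treatment is iterated — in fact $K_n'$ is closed and dense-in-itself (as a subset of $F$ it inherits $\mathfrak c$-density, hence density, in itself once we discard its own isolated points too), so one should really apply the construction to the perfect kernel of $K_n$ plus hairs for the scattered part; the cleanest formulation is: let $P_n$ be the perfect kernel of $K_n$ (possibly empty), and $S_n=K_n\setminus P_n$ the scattered part, which is countable, and set $C_n=P_n\cup\bigcup_{x\in S_n}H_x$ with the $H_x$ chosen disjoint, disjoint from $P_n$, and shrinking so their diameters $\to 0$ along any enumeration — this is a nonempty (use $F\neq\emptyset$ and, if $K_n=\emptyset$, just throw in one perfect set from $F$) perfect subset of $F$ with $K_n\subset C_n$.

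Having produced perfect sets $C_n\subset F$ with $\bigcup_n C_n\supset\bigcup_n K_n=F$ and hence $\bigcup_n C_n=F$, I would then force monotonicity and strictness. Monotonicity is free: replace $C_n$ by $C_1\cup\dots\cup C_n$, still perfect (finite union of perfect sets), still $\subset F$, still exhausting $F$. For strictness, note that $F$ is not closed, so $F\neq C_n$ for every $n$ (each $C_n$ is closed); pick $z_n\in F\setminus C_n$, use Lemma \ref{isolated_points} to get a nonempty perfect $H\subset F$ with $z_n\in H$, and adjoin $C_{n+1}\cup(\text{a suitable perfect set through }z_n)$ into $C_{n+1}$ — concretely, after the monotone adjustment, if some inclusion $C_n\subset C_{n+1}$ failed to be strict, we can enlarge $C_{n+1}$ by one more perfect piece of $F$ that meets $F\setminus C_n$; since $F\setminus C_n\neq\emptyset$ and $F$ is $\mathfrak c$-dense in itself, such a piece exists inside any interval meeting $F\setminus C_n$, and throwing it in keeps $C_{n+1}$ perfect and inside $F$. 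Doing this for all $n$ (and re-monotonizing) yields $C_1\subsetneq C_2\subsetneq\dots$ perfect with $\bigcup_n C_n=F$, as required.

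The step I expect to be the main obstacle is verifying that the ``perfect kernel plus hairs'' set $C_n$ is genuinely closed and has no isolated points: one must choose the perfect sets $H_x$ (for $x$ scattered in $K_n$) with controlled, shrinking diameters and pairwise disjoint from one another and from $P_n$, so that a convergent sequence from $\bigcup_x H_x$ either stays in one $H_x$ (limit in $H_x$) or has its terms drawn from infinitely many $H_x$ with diameters $\to 0$, forcing the limit to be a limit of the corresponding points $x\in K_n$, hence in $\overline{K_n}=K_n$, and in fact in $P_n$ once the isolated points are stripped — here a small bookkeeping argument using that $K_n$ is closed and that the scattered part is relatively discrete does the job. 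Everything else (finite unions of perfect sets are perfect, closedness of $K_n$, using $F$ non-closed for strictness, $\mathfrak c$-density to find perfect subsets via Lemma \ref{isolated_points}) is routine.
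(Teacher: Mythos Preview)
Your proposal is correct and follows the same strategy as the paper: write $F$ as an increasing union of closed sets and repair each into a perfect subset of $F$ by attaching, via Lemma~\ref{isolated_points}, small perfect ``hairs'' through the problematic points, then verify closedness by exactly the diameter--shrinking case analysis you outline. The only difference is cosmetic: the paper attaches hairs at the \emph{isolated} points of each $D_n$ (a discrete set, so pairwise disjoint neighbouring intervals come for free and one level of de-isolation already makes $D_n\cup\bigcup_k H_k$ dense in itself) rather than passing to the Cantor--Bendixson decomposition $K_n=P_n\cup S_n$ as you do; your version works too but, since $S_n$ need not be discrete, ``the $H_x$ chosen disjoint'' requires an inductive choice, and your aside ``in fact in $P_n$'' is slightly off (the limit could land in $S_n$, though it is still in $C_n$).
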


\begin{proof}
Let $D_1\subset D_2\subset\dots$ be a sequence of closed sets with $F=\bigcup_{n}D_n$. Let $A=\{x_k:k\in J\}$ be the set of all isolated points of $D_1$ where $J\subset\N$. One can find a family $\{I_k\}$ of pairwise disjoint open intervals such that $x_k\in I_k$ and $\mathrm{diam}(I_k)\leq 1/k$. By Lemma \ref{isolated_points} we can find closed $\mathfrak{c}$-dense in itself sets $H_k\subset I_k\cap F$ with $x_k\in H_k$, $k\in J$. Put $C_1=D_1\cup\bigcup_k H_k$. Clearly $C_1\subset F$. 

We will prove that $C_1$ is perfect. The construction ensures that $C$ is dense in itself. Thus it is enough to show that $C_1$ is closed. Let $(c_n)$ be a sequence of elements of $C_1$ tending to $c$. Consider the following cases.\\
{\bf 1.} If infinitely many elements of $(c_n)$ is from $D_1$, then $c\in D_1$.\\
{\bf 2.} Assume that every element of $(c_n)$ is in $\bigcup_k H_k$. For every $n$ there is $k_n$ with $c_n\in H_{k_n}$.\\ 
{\bf 2(a).} If $k_n=n_0$ for infinitely many $n$'s, then $c\in H_{n_0}\subset C_1$.\\ 
{\bf 2(b).} Assume that $(k_n)$ is a one-to-one sequence of natural numbers. Since $\textrm{diam}(H_{k_n})\leq 1/k_n$, then $d(x_{k_n},c)\leq d(c_n,x_{k_n})+ d(c_n,c)\leq 1/k_n+d(c_n,c)\to 0$. Therefore $c\in D_1$. 

Since $F$ is not closed, then $F\setminus C_1$ is nonempty   $\mathfrak{c}$-dense in itself $F_\sigma$ set. Using the above reasoning we can find a closed set $C_2'$ inside $F$ such that  $C_2'$ is $\mathfrak{c}$-dense in itself and $C_2'$ contains $D_2$. Put $C_2=C_1\cup C_2'$. Then $C_2$ is perfect and the difference $C_2\setminus C_1$ is $\mathfrak{c}$-dense in itself. Proceeding inductively we define $C_n$. Since $C_n$ contains $D_n$, the result follows. 
\end{proof}

For a $G_\delta$ set $G$, by $\mathcal C_G$ we denote the set of all functions $f:\R\to\R$ such that the set of continuity points of $f$ equals $G$. 

\begin{theorem}\label{theorem_algebrability}
Let $G\subset\R$ be a $G_\delta$-set. The following conditions are equivalent:
\begin{itemize}
\item[(i)] $\mathcal C_G$ is strongly $2^\mathfrak{c}$-algebrable;
\item[(ii)] $\mathcal C_G$ is $\mathfrak{c}^+$-lineable; 
\item[(iii)] $\R\setminus G$ is $\mathfrak{c}$-dense in itself.
\end{itemize}
\end{theorem}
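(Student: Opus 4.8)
The plan is to prove the cycle of implications $(i)\Rightarrow(ii)\Rightarrow(iii)\Rightarrow(i)$. The implication $(i)\Rightarrow(ii)$ is immediate: a free algebra on $2^\mathfrak{c}$ generators is in particular a linear space of dimension $2^\mathfrak{c}\geq\mathfrak{c}^+$ contained in $\mathcal C_G\cup\{0\}$, so $\mathcal C_G$ is $\mathfrak{c}^+$-lineable. The substantive directions are $(ii)\Rightarrow(iii)$ and $(iii)\Rightarrow(i)$.

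For $(ii)\Rightarrow(iii)$ I would argue by contraposition. Suppose $\R\setminus G$ is not $\mathfrak{c}$-dense in itself; then there is an open interval $I$ with $0<|(\R\setminus G)\cap I|<\mathfrak{c}$, i.e. $(\R\setminus G)\cap I$ is a nonempty set of size at most $\mathfrak{c}$ but actually we need the failure of the dichotomy, so $(\R\setminus G)\cap I$ is nonempty and of cardinality $<\mathfrak{c}$. Pick a point $p\in(\R\setminus G)\cap I$. The key observation is that if $f\in\mathcal C_G$, then $f$ is discontinuous at $p$, and the "amount" of discontinuity is controlled: for a function discontinuous at an isolated-type point of $\R\setminus G$ inside $I$, the oscillation data at the finitely-or-fewer-than-$\mathfrak{c}$ bad points of $\R\setminus G$ near $p$ gives only few degrees of freedom. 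More precisely, I expect the right statement is that $\mathcal C_G$ restricted near such an $I$ embeds into a space of functions determined by countably much data (since away from the $<\mathfrak{c}$-many discontinuity points the function is continuous, hence determined by its values on a countable dense set), forcing $|\mathcal C_G|\le\mathfrak{c}$ on that piece and bounding the dimension of any linear subspace by $\mathfrak{c}$, contradicting $\mathfrak{c}^+$-lineability. The main obstacle here is making the counting rigorous: one must show that a linear space of functions continuous on $I\setminus F$ for a fixed $F$ with $|F|<\mathfrak{c}$ has dimension at most $\mathfrak{c}$, which follows because such a function is determined by its restriction to $F$ together with its restriction to a countable dense subset of $I$, so the space injects linearly into $\R^{F}\times\R^{\mathbb Q\cap I}$, a space of algebraic dimension $\mathfrak{c}$.

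For the central implication $(iii)\Rightarrow(i)$, assume $\R\setminus G$ is $\mathfrak{c}$-dense in itself. Write $F=\R\setminus G$; this is an $F_\sigma$ set which is $\mathfrak{c}$-dense in itself. I would split into two cases according to whether $F$ is closed. If $F$ is closed (hence perfect, by the remark preceding the lemmas), one can use Lemma \ref{isolated_points} to cover $F$ appropriately; if $F$ is not closed, Lemma \ref{F_sigma-c-dense} gives a strictly increasing sequence $C_1\subsetneq C_2\subsetneq\cdots$ of perfect sets with $F=\bigcup_n C_n$, and one works with the perfect pieces $C_1$ and $C_{n+1}\setminus C_n$. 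In either case the goal is to produce a free algebra: take a countable family of pairwise disjoint perfect subsets $D_n$ of $F$, each of cardinality $\mathfrak{c}$, chosen so that every point of $F$ is a limit of points of $\bigcup_n D_n$ from within $F$ and so that each $D_n$ together with its ambient interval structure "witnesses" discontinuity at all of $F$; set $\mathcal F=\{D_n:n\in\N\}$ and apply Theorem \ref{main1} with $X=D:=\bigcup_n D_n$, $I=[0,1]$ (a set with nonempty interior, and $\mathfrak{c}^\omega=\mathfrak{c}$) to get a free algebra $\mathcal A'$ on $2^\mathfrak{c}$ generators inside $\mathcal{SES}([0,1],\mathcal F)$. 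Extend each generator by $0$ off $D$ to obtain $f_\xi$, generating a free algebra $\mathcal A$.

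It then remains to check that every nonzero $f\in\mathcal A$ has continuity set exactly $G$. First, $f$ is discontinuous at every point of $F=\R\setminus G$: a nonzero $f=P(f_{\xi_1},\dots,f_{\xi_k})$ takes every value of the nontrivial set $P([0,1]^k)$ $\mathfrak{c}$-many times on each $D_n$, so near any point of $F$ (which is a limit point of $\bigcup_n D_n$) it assumes values in $P([0,1]^k)$ as well as the value $0$ off $D$, and since $P([0,1]^k)\neq\{0\}$, $f$ oscillates there and is discontinuous. Second, $f$ is continuous at every point of $G$: here I would use that $D=\bigcup_n D_n$ is contained in $F$, that the $D_n$ can be arranged to be nowhere dense with the points of $D$ "clustering only toward $F$", and that $f$ vanishes off $D$; more carefully, one should choose the $D_n$ inside small intervals shrinking toward $F$ so that every point of $G$ has a neighbourhood meeting only $D_n$'s of large index, hence a neighbourhood on which... — this is exactly the delicate point. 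The honest fix, mirroring the closed-set construction of \cite{BBG}, is to arrange $\overline{D}\setminus D\subset F$ and $\overline{D}\subset F$ is false in general, so instead one enumerates a neighbourhood basis and places the $D_n$ so that the closure of $D$ is contained in $F$ together with controlled limit points; since $F$ is $F_\sigma$ and $\mathfrak{c}$-dense in itself this can be done, and then $f$, being $0$ on a neighbourhood of each point of $G$ intersected with the complement of finitely many $\overline{D_n}$, is continuous on $G$. I expect this last verification — that the continuity set is not too large, i.e. contains all of $G$ — to be the main obstacle, and the natural way around it is precisely Lemma \ref{F_sigma-c-dense}: decomposing $F$ into the increasing perfect sets $C_n$ lets one build $\mathcal A$ so that each $f\in\mathcal A$ is continuous off $F$ because its support closure is swallowed by $F=\bigcup_n C_n$ in a locally finite manner relative to $G$.
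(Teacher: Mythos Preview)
Your argument for $(ii)\Rightarrow(iii)$ is essentially the paper's, modulo one detail you gloss over: from $|(\R\setminus G)\cap I|<\mathfrak{c}$ you need $|(\R\setminus G)\cap I|\leq\omega$ for your dimension count to go through (otherwise $\R^{F}$ need not have dimension $\leq\mathfrak{c}$). This holds because $\R\setminus G$ is $F_\sigma$ and an uncountable $F_\sigma$ subset of $\R$ contains a perfect set, hence has cardinality $\mathfrak{c}$. Once this is noted, your counting/pigeonhole argument coincides with the paper's: two basis elements agree on $I$, and their difference is continuous at the points of $I\setminus G$.

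For $(iii)\Rightarrow(i)$ there is a genuine gap in the non-closed case. Your plan (with a fixed range $I=[0,1]$) needs a set $D=\bigcup_n D_n\subset F$ satisfying two incompatible conditions: you want $F\subset\overline{D}$ so that every $f\in\mathcal A$ oscillates near each point of $F$, and you want $\overline{D}\subset F$ so that each point of $G$ has a neighbourhood on which $f\equiv 0$. Together these give $\overline{D}=F$, forcing $F$ closed --- exactly the case you are trying to avoid. Your closing appeal to ``local finiteness'' of the perfect sets $C_n$ from Lemma~\ref{F_sigma-c-dense} does not help either: the $C_n$ are increasing, so if a neighbourhood of $x\in G$ meets $C_1$ it meets every $C_n$, and local finiteness fails at every boundary point of $G$.

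The missing idea, which the paper uses, is to let the \emph{range} shrink with $n$: on the layer $C_n\setminus C_{n-1}$ apply Theorem~\ref{main1} with $I=[0,1/n]$ (and with $\mathcal F_n$ the relatively open nonempty subsets of $C_n\setminus C_{n-1}$), then glue. For $x\in G$ on the boundary and $x_k\to x$ with $x_k\in C_{n_k}\setminus C_{n_k-1}$, closedness of each $C_n$ together with $x\notin\bigcup_n C_n$ forces $n_k\to\infty$, so $P(f_{\xi_1},\dots,f_{\xi_m})(x_k)\in P([0,1/n_k]^m)\to 0=f(x)$. Discontinuity at each point of $C_n\setminus C_{n-1}$ still follows because every relatively open piece is mapped onto the nondegenerate set $P([0,1/n]^m)$. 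This shrinking-range trick is what replaces the impossible closure condition on the support.
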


\begin{proof}
The implication (i)$\Rightarrow$(ii) is obvious. 

(ii)$\Rightarrow$(iii): Assume that $\R\setminus G$ is not $\mathfrak{c}$-dense in itself. Then there is an open interval $I\subset\R$ such that $0<|I\setminus G|\leq\omega$. Let $I\setminus G=\{a_j:j\in J\}$ where $J\subset\N$. Note that every function $f:I\to\R$ such that $G\cap I$ is a set of all continuity points of $f$ can be written as $f=f\vert_{I\setminus G}\cup f\vert_{G\cap I}$ where $f\vert_{G\cap I}$ is continuous. There are exactly $\mathfrak{c}$ many continuous functions from $G\cap I$ to $\R$ and at least $\mathfrak{c}$ many functions from $I\setminus G$ to $\R$. Hence there are $\mathfrak{c}$ many functions $f:I\to\R$ such that $G\cap I$ is a set of all continuity points of $f$. 

Suppose that $\mathcal C_G$ is $\mathfrak{c}^+$-lineable. Let $\{f_\alpha:\alpha<\mathfrak{c}^+\}$ be a basis of linear space $M\subset \mathcal C_G\cup\{0\}$. By the above observation there are $\alpha<\beta<\mathfrak{c}^+$ such that $f_\alpha\vert_I=f_\beta\vert_I$. Hence $f_\alpha-f_\beta$ equals zero on $I$. But we have assumed that $I\setminus G\neq\emptyset$. Thus the set of continuity points of $f_\alpha-f_\beta$ is not equal to $G$ and we reach a contradiction. 

(iii)$\Rightarrow$(i): Let $\R\setminus G$ be perfect. For any open set $U$, the intersection $U\cap(\R\setminus G)$ is either empty or of cardinality continuum. Let $\mathcal F=\{U\cap(\R\setminus G):U$ is open and $U\cap(\R\setminus G)\neq\emptyset\}$ and let $\mathcal A'\subset\mathcal{SES}(\R,\mathcal F)$ be the free linear algebra generated by surjections $f'_{\xi}:\R\setminus G\to\R$, $\xi<2^\mathfrak{c}$. Finally we define functions $f_\xi$, $\xi<2^\mathfrak{c}$ by 
$$
f_\xi(x)=
\left\{\begin{array}{ll}
f'_{\xi}(x), & \textrm{ if }x\in \R\setminus G,\\
0, & \textrm{ if }x\in G.
\end{array} \right.
$$
Clearly the functions $f_\xi$, $\xi<2^\mathfrak{c}$, generate the desired free linear algebra.

Now, assume that $\R\setminus G$ is not closed.
Since $\R\setminus G$ is $\mathfrak{c}$-dense in itself, then by Lemma \ref{F_sigma-c-dense} there is a sequence $C_n$ of perfect sets such that $C_1$ and $C_{n+1}\setminus C_n$ are $\mathfrak{c}$-dense in itself, and $\R\setminus G=\bigcup_{n}C_n$. Then $G=\bigcap_{n}\R\setminus C_n$. Let $\mathcal F_n=\{U\cap(C_n\setminus C_{n-1}):U$ is open and $U\cap(C_n\setminus C_{n-1})\neq\emptyset\}$ where $C_0=\emptyset$. Let $\mathcal A_n\subset\mathcal{SES}([0,1/n],\mathcal F_n)$ denote the free linear algebra generated by functions $f_{\xi}^n:C_n\setminus C_{n-1}\to[0,1/n]$, $\xi<2^\mathfrak{c}$. Finally, we define functions $f_\xi$, $\xi<2^\mathfrak{c}$ by 
$$
f_\xi(x)=
\left\{\begin{array}{ll}
f_{\xi}^n(x), & \textrm{ if }x\in C_n\setminus C_{n-1},\\
0, & \textrm{ if }x\in G.
\end{array} \right.
$$
We will prove that the functions $f_\xi$, $\xi<2^\mathfrak{c}$, generate the desired free linear algebra. Let $P$ be a polynomial in $m$ variables. Since $f:=P(f_{\xi_1},\dots f_{\xi_m})\vert_{C_n\setminus C_{n-1}}\in\mathcal{A}_n$, then $f$ maps every nonempty open subset of $C_n\setminus C_{n-1}$ onto $P([0,1/n]^m)$, and therefore $f$ is discontinuous at each point of $C_n\setminus C_{n-1}$. 

It remains to prove that $f$ continuous at any point of $G$. Let $x\in G$. If $x$ is an interior point of $G$, then $f$ equals zero on some neighborhood of $x$, and consequently $f$ is continuous at $x$. Assume that $x$ is not interior point of $G$ and $x_k$ is a sequence of elements from $\R\setminus G$ tending to $x$. Since $x_k\notin G$, then there is $k_n$ with $x\in C_{k_n}$. Clearly $k_n\to\infty$. Therefore $f(x_k)\in P([0,1/k_n]^m)$. Hence $f(x_k)\to 0=f(x)$.    
\end{proof}

Theorem \ref{theorem_algebrability} says that if $\R\setminus G$ is not $\mathfrak{c}$-dense in itself, then $\mathcal C_G$ is not $\mathfrak{c}^+$-lineable. A natural question arises: Is $\mathcal C_G$ $\mathfrak{c}$-lineable? The next theorem gives the affirmative answer.

\begin{theorem}\label{c+lineability}
Let $G\subset\R$ be a $G_\delta$ set such that $\R\setminus G$ is not $\mathfrak{c}$-dense in itself. Then $\mathcal C_G$ is $\mathfrak{c}$-lineable. 
\end{theorem}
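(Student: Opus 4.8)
The plan is to reduce $\mathcal C_G$-membership to a single "bad" interval and then build a $\mathfrak c$-dimensional linear space of functions that behave identically off that interval. Since $\R\setminus G$ is not $\mathfrak c$-dense in itself, fix an open interval $I$ with $0<|I\setminus G|\le\omega$; write $I\setminus G=\{a_j:j\in J\}$ with $J\subseteq\N$. As observed in the proof of Theorem \ref{theorem_algebrability}, any $f:\R\to\R$ whose continuity set is exactly $G$ restricts to a continuous function on $G\cap I$ together with an arbitrary assignment on the countable set $\{a_j:j\in J\}$, and conversely a function which is a "standard" witness of $\mathcal C_G$ outside $I$ and is suitably wild at each $a_j$ inside $I$ will lie in $\mathcal C_G$.

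First I would produce one base function $g\in\mathcal C_G$; such functions exist because $\R\setminus G$ is at least dense in itself wherever it is nonempty, and one can take, e.g., a function which on $\R\setminus G$ is a Bernstein-type everywhere surjective map relative to perfect subsets of $\R\setminus G$ and is $0$ on $G$ — the point being only that its continuity set is exactly $G$. Next, to obtain $\mathfrak c$-many linearly independent perturbations, I would exploit the countable set $I\setminus G$: choose a sequence $(I_j)_{j\in J}$ of pairwise disjoint open intervals with $a_j\in I_j\subset I$, each $I_j$ shrinking so that $\bigcap$ of the complements recovers the isolation of the $a_j$'s, and on each $I_j$ pick a nonzero continuous-off-$a_j$ bump $b_j$ that is continuous on $I_j\setminus\{a_j\}$, vanishes near $\partial I_j$, and is discontinuous at $a_j$ (e.g. oscillating with amplitude $1$ as $x\to a_j$). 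For a vector $r=(r_j)_{j\in J}\in\R^J$ define $h_r=\sum_{j\in J}r_j b_j$, extended by $0$ outside $\bigcup_j I_j$; the disjoint supports make this a well-defined function whose continuity set is $\R\setminus\{a_j:r_j\ne 0\}$ relative to $\bigcup I_j$, and which is $0$ elsewhere. Then set $f_r=g+h_r$.

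The family $\{f_r:r\in\R^J\}$ is not yet $\mathfrak c$-dimensional as an abstract object, so the real content is: (a) for every nonzero $r$, $f_r\in\mathcal C_G$; and (b) a $\mathfrak c$-dimensional subspace can be extracted. For (a), note that $g$ already has continuity set exactly $G$, adding $h_r$ does not create continuity at any point of $I\setminus G$ where $r_j\ne0$ (the oscillation of $b_j$ persists), does not destroy continuity at points of $G$ (since $h_r$ is continuous there: it is locally $0$ at interior points of $G$ and, at boundary points of $G$ inside $\overline I$, the intervals $I_j$ accumulate only at points of $\R\setminus G$, so $h_r\to 0$), and outside $I$ nothing changed; one must also handle the points $a_j$ with $r_j=0$, which remain discontinuity points because $g$ is already discontinuous there. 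For (b), rather than using all of $\R^J$, fix a linearly independent family $\{r^{(\alpha)}:\alpha<\mathfrak c\}\subset\R^J$ with the property that any nontrivial finite rational (indeed real) combination has some nonzero coordinate — any basis of $\R^J$ over $\Q$, or simply $\mathfrak c$-many eventually-nonzero sequences in general position — and let $M=\operatorname{span}\{f_{r^{(\alpha)}}:\alpha<\mathfrak c\}$. A nonzero element of $M$ has the form $c_1 f_{r^{(\alpha_1)}}+\dots+c_k f_{r^{(\alpha_k)}}=(c_1+\dots+c_k)g+h_{c_1 r^{(\alpha_1)}+\dots+c_k r^{(\alpha_k)}}$; if $\sum c_i\ne0$ it is a nonzero multiple of $g$ plus a bounded perturbation supported in $I$, still with continuity set $G$, and if $\sum c_i=0$ it equals $h_{r'}$ for some nonzero $r'$, whose continuity set is $\R\setminus\{a_j:r'_j\ne0\}$ — and here I need $\{a_j:r'_j\ne 0\}$ to be cofinal enough that the continuity set is still exactly $G$. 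This is the main obstacle: the "$\sum c_i=0$" case gives functions supported in $I$, whose continuity set is $G\cup(\text{the rest of }\R)\setminus\{a_j:r'_j\ne0\}$, which is $G$ only if \emph{every} $a_j$ has $r'_j\ne 0$. To force this I would instead design the base perturbations so that all but finitely many coordinates are pinned: e.g. work with $\R^J=\R^{J_0}\oplus\R^{\N\setminus J_0}$ for a fixed infinite-coinfinite $J_0$, take $g$ already discontinuous (hence "safe") at all $a_j$ with $j\in J_0$, and let the $r^{(\alpha)}$ vary only in the $\R^{\N\setminus J_0}$-coordinates while being constantly $1$ on a cofinite-in-$(\N\setminus J_0)$ set — more simply, choose the $r^{(\alpha)}$ so that pairwise differences are never eventually zero. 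The cleanest fix, which I would adopt, is: take $g\in\mathcal C_G$ arbitrary, and take the perturbations $h_{r^{(\alpha)}}$ to be supported on a \emph{fixed} perfect $\mathfrak c$-sized subset of $\R\setminus G$ far from $I$, using Theorem \ref{main} to get a $\mathfrak c$-dimensional (indeed $2^\mathfrak c$, but $\mathfrak c$ suffices) space of bounded functions on that perfect set each of which is everywhere discontinuous on it, extended by $0$; then $g$ plus any such perturbation still has continuity set exactly $G$, since the perturbation is bounded, discontinuous exactly where $g$ already was (on $\R\setminus G$) and continuous (identically $0$) on $G$, and linear independence of the $2^\kappa$-type generators from Theorem \ref{main} is automatic. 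With that route the verification is routine and the obstacle disappears; I would present this perfect-set version as the actual proof and relegate the interval bump picture to motivation.
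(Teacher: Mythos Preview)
Your ``cleanest fix'' has the same gap you correctly diagnosed in the bump construction, plus an additional one.

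First, existence: there need not be any perfect subset of $\R\setminus G$ at all. Take $G=\R\setminus\Q$; then $\R\setminus G=\Q$ is countable (so certainly not $\mathfrak c$-dense in itself, and the hypothesis holds), and there is no perfect set on which to support the $h_{r^{(\alpha)}}$'s. More generally, it is precisely because the set of prescribed discontinuities may be countable that the problem is delicate.

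Second, even when a perfect set $P\subset\R\setminus G$ with $P\cap I=\emptyset$ exists, the $\sum c_i=0$ obstacle does not go away. A nonzero element of the span of $\{g+h_{r^{(\alpha)}}\}$ with $\sum c_i=0$ equals $\sum c_i h_{r^{(\alpha_i)}}$, which is supported on $P$ and hence vanishes identically on a neighbourhood of each $a_j\in I\setminus G$. Its continuity set therefore contains $G\cup\{a_j:j\in J\}\supsetneq G$, so it is not in $\mathcal C_G$. Moving the perturbations away from $I$ makes things worse, not better: you now fail at \emph{every} $a_j$ rather than only at those with vanishing coefficient.

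The paper's proof meets this head-on. At each $a_i$ one uses $\psi_{\alpha,i}(t)=\sin\bigl(r_\alpha/(t-a_i)\bigr)$, with $\{r_\alpha:\alpha<\mathfrak c\}\cup\{\pi\}$ linearly independent over $\Q$. By the multidimensional Kronecker theorem the image of $(\psi_{\alpha_1,i},\dots,\psi_{\alpha_k,i})$ on any punctured neighbourhood of $a_i$ is all of $[-1,1]^k$, so \emph{every} nontrivial linear combination $c_1\psi_{\alpha_1,i}+\dots+c_k\psi_{\alpha_k,i}$ remains discontinuous at $a_i$. There is no base function to cancel; the generators $\varphi_\alpha$ themselves lie in $\mathcal C_G$ (the part on $\R\setminus U$, where the complement of $G$ \emph{is} $\mathfrak c$-dense in itself, being handled via Theorem~\ref{theorem_algebrability}). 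This number-theoretic device, or something equivalent, is not optional: you must produce $\mathfrak c$ many functions such that every nonzero linear combination is discontinuous at each of a fixed countable set of points, and scalar multiples of a single fixed bump at each point cannot achieve that.
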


\begin{proof}
Since $\R\setminus G$ is not $\mathfrak{c}$-dense in itself, there is open set $I$ with $1\leq \vert(\R\setminus G)\cap I\vert\leq\omega$. Let $U=\bigcup\{I:I$ is open and $1\leq\vert(\R\setminus G)\cap I\vert\leq\omega\}$. Clearly $U$ is a nonempty open set. Then $G\setminus U$ is of type $G_\delta$ and $(\R\setminus U)\setminus(G\setminus U)$ is $\mathfrak{c}$-dense in itself. Let $\{I_n\}$ be the family of all connected components of $U$. Fix $I_n$. Let $\{a_i:i\in N_n\}$ be an enumeration of the set $(\R\setminus G)\cap I_n$ where $N_n\subset\N$. 

Let $\{r_\alpha:\alpha<\mathfrak{c}\}\cup\{\pi\}$ be a set of positive real numbers which is linearly independent over $\Q$. 
For any $a_i$ let $\varepsilon_{\alpha,i}$ be a positive real number such that $(a_i-2\varepsilon_{\alpha,i},a_i+2\varepsilon_{\alpha,i})\subset I_n$ and $\sin(r_{\alpha}/\varepsilon_{\alpha,i})=0$. Define $\psi_{\alpha,i}: I_n\to\R$ in the following way. Put $\psi_{\alpha,i}(t)=\sin(\frac{r_\alpha}{t-a_i})$ if $t\in(a_i-\varepsilon_{\alpha,i},a_i)\cup(a_i,a_i+\varepsilon_{\alpha,i})$, and $\psi_{\alpha,i}(t)=0$ otherwise. Define $\varphi_{\alpha, n}:I_n\to\R$ by the formula
$$
\varphi_{\alpha,n}(t)=\vert I_n\vert\sum_{i\in N_n}\frac{1}{2^i}\psi_{\alpha,i}(t).
$$ 
By $\varphi_{\alpha}':\R\setminus U\to\R$ denote algebraically independent functions such that the linear algebra generated by $\{\varphi_\alpha':\alpha<\mathfrak{c}\}$ consists of functions $f:\R\to\R$ which set of continuity points equal $G\cup U$ and $f\vert_{G\cup U}=0$. It can be done using the same reasoning as in the proof of Theorem \ref{theorem_algebrability}. Finally, we define $\varphi_\alpha:\R\to\R$ in the following way. Put $\varphi_\alpha(t)=\varphi_{\alpha,n}(t)$ if $t\in I_n$ and $\varphi_\alpha(t)=\varphi_\alpha '(t)$ if $t\in\R\setminus U$. We will prove that the family of functions $\{\varphi_\alpha:\alpha<\mathfrak{c}\}$ is linearly independent and any nonzero linear combination of its elements is in $\mathcal{C}_G$.  

Let $c_1,\dots,c_k$ be a nonzero real numbers, $\alpha_1<\dots<\alpha_k<\mathfrak{c}$, and consider $\varphi:=c_1\varphi_{\alpha_1}+\dots+c_k\varphi_{\alpha_k}$. We need to prove that $\varphi$ belongs to $\mathcal C_G$. 
\begin{claim}
The set of continuity points of $\varphi\vert_{I_n}=c_1\varphi_{\alpha_1,n}+\dots+c_k\varphi_{\alpha_k,n}$ equals $G\cap I_n$
\end{claim} 
\begin{proof}
We have
$$
\varphi\vert_{I_n}(t)=c_1\varphi_{\alpha_1,n}(t)+\dots+c_k\varphi_{\alpha_k,n}(t)=\vert I_n\vert\sum_{i\in N_n}\frac{1}{2^i}(c_1\psi_{\alpha_1,i}(t)+\dots+c_k\psi_{\alpha_k,i}(t))
$$
Using the multidimensional Kronecker Lemma (see \cite[Theorem 442]{HW}) we infer that $\{(\psi_{\alpha_1,i}(t),\dots,\psi_{\alpha_k,i}(t)):t\in (a_i-\delta,a_i+\delta)\}=[-1,1]^k$ for any $\delta>0$. Therefore, the range of the function $t\mapsto c_1\psi_{\alpha_1,i}(t)+\dots+c_k\psi_{\alpha_k,i}(t)$ restricted to any neighborhood of $a_i$ equals the range of the function $[-1,1]^k\ni (t_1,\dots,t_k)\mapsto c_1t_1+\dots+c_kt_k$. Thus $t\mapsto c_1\psi_{\alpha_1,i}(t)+\dots+c_k\psi_{\alpha_k,i}(t)$ is discontinuous at $a_i$. By the Weierstrass M-test, the function $\varphi\vert_{I_n}$ is continuous at every point of $G\cap I_n$ and the function $\varphi\vert_{I_n}-(c_1\psi_{\alpha_1,i}+\dots+c_k\psi_{\alpha_k,i})$ is continuous at every point of $(G\cap I_n)\cup\{a_i\}$. Therefore $\varphi\vert_{I_n}$ is discontinuous at any $a_i$. 
\end{proof}

\begin{claim}
Let $x\in\R\setminus U$. Then $x$ is continuity point of $\varphi$ if and only if $x\in G$. 
\end{claim}

\begin{proof}
Assume first that $x\notin G$. Since $(\R\setminus U)\setminus(G\setminus U)$ is $\mathfrak{c}$-dense in itself, then by the construction the function $\varphi\vert_{\R\setminus G}=c_1\varphi_{\alpha_1}'+\dots+c_k\varphi_{\alpha_k}'$ is discontinuous at $x$. Thus $\varphi$ is also discontinuous at $x$.

Assume now that $x\in G$. Then $\varphi(x)=0$ and $\varphi\vert_{\R\setminus G}$ is continuous at $x$. If $x$ is an interior point of $\R\setminus U$, then $\varphi$ is continuous at $x$. Suppose then that $x$ a boundary point of $G$. There is a sequence $(x_m)$ of elements of $U$ tending to $x$. Let $n_m$ be such that $x_m\in I_{n_m}$. 

If $(n_m)$ is a bounded sequence, we may assume that $n_m=n'$ for every $m$. Then $x$ is an endpoint of the interval $I_{n'}=(a,b)$. We may assume that $x=a$ and $(x_m)$ is decreasing. If $N_{n'}$ is finite, then let $a_{i'}$ be the smallest element of the set $\{a_i:i\in N_{n'}\}$. Note that $\psi_{\alpha_j,i}(t)=0$ for $t\in(a,a_{i'}-\varepsilon_{\alpha_j,i'})$, $j=1,\dots,k$ and $i\in N_{n'}$. Hence $\varphi(x_m)=0$ for large enough $m$. If $N_{n'}$ is infinite, then fix $\delta>0$. Let $i'$ be such that $\vert I_{n'}\vert(\vert c_1\vert+\dots\vert c_k\vert)\sum_{i=i'}^\infty 1/2^i<\delta$. Let $t\in(a,\min_{i<i',j=1,\dots,k}(a_i-\varepsilon_{\alpha_j,i}))$. By the construction $\vert\varphi(t)\vert<\delta$. Therefore $\varphi(x_m)\to 0$.  

If $(n_m)$ is an unbounded sequence, then we may assume that it is one-to-one. Since $x_m\in I_{n_m}$ and $x_m\to x$, the diameters of $I_{n_m}$ tends to zero with $m$ tending to $\infty$. By the construction $\vert\varphi(x_m)\vert\leq(\vert c_1\vert+\dots+\vert c_k\vert)\vert I_{n_m}\vert$. Hence $\varphi(x_m)\to 0$.
\end{proof}
\end{proof}

Assume that $G\subset\R$ is a $G_\delta$ set such that $\R\setminus G$ is not $\mathfrak{c}$-dense in itself. Is $\mathcal C_G$ strongly $\mathfrak{c}$-algebrable? We are not able to prove it but we show strong $\mathfrak{c}$-algebrability of $\mathcal{C}_G$ in some special case. Let $I$ be, as in the proof of Theorem \ref{c+lineability}, an open set with $1\leq \vert(\R\setminus G)\cap I\vert\leq\omega$. Let $U=\bigcup\{I:I$ is open and $1\leq\vert(\R\setminus G)\cap I\vert\leq\omega\}$.

\begin{theorem}
Assume that $U\setminus G$ is nonempty and discrete. Then $\mathcal C_G$ is strongly $\mathfrak{c}$-algebrable. 
\end{theorem}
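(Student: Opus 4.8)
I would argue as follows. The plan is to produce a single ``skeleton'' function $\varphi\colon\R\to\R$ whose set of continuity points is exactly $G$ and which oscillates maximally at every point of $\R\setminus G$, and then to inflate it to a free algebra on $\mathfrak{c}$ generators by the substitution $f_\xi=\exp(\beta_\xi\varphi)-1$, where $\{\beta_\xi:\xi<\mathfrak{c}\}\subset\R$ is linearly independent over $\Q$ (such a set exists, being a Hamel basis). The point of this device is that any polynomial without constant term in the $f_\xi$'s becomes $h\circ\varphi$ for a nonconstant real-analytic $h$, and $h\circ\varphi$ has the same continuity set as $\varphi$.

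First I would build $\varphi$. Since $U\setminus G$ is discrete it is countable, say $U\setminus G=\{a_i:i\in\N\}$; by discreteness one may fix pairwise disjoint open intervals $(a_i-2\varepsilon_i,a_i+2\varepsilon_i)\subset U$ with $\sin(1/\varepsilon_i)=0$ and $\varepsilon_i$ small. Put $\psi_i(t)=\sin(1/(t-a_i))$ on $(a_i-\varepsilon_i,a_i)\cup(a_i,a_i+\varepsilon_i)$ and $\psi_i=0$ elsewhere, so each $\psi_i$ is continuous except at $a_i$, and every neighbourhood of $a_i$ is mapped by $\psi_i$ onto $[-1,1]$. On $U$ let $\varphi=\sum_i\lambda_i\psi_i$ (a pointwise finite sum), with positive weights $\lambda_i$ chosen — exactly as in the proof of Theorem~\ref{c+lineability}, now with every $r_\alpha$ replaced by $1$ — so that $\varphi$ tends to $0$ at each point of $\overline U\setminus U$ and at each accumulation point of $\{a_i\}$ lying in $G$. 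On $\R\setminus U$ set $\varphi=0$ on $G\setminus U$, and on $A:=(\R\setminus G)\setminus U$, which is an $F_\sigma$ set that is $\mathfrak{c}$-dense in itself (here one uses that $U\setminus G$ is countable, so removing it from an interval leaves $\mathfrak{c}$ points, together with the perfect-set property of Borel sets), let $\varphi$ be a surjection of $A$ onto $\R$ if $A$ is closed (hence perfect), or, writing $A=\bigcup_nC_n$ as in Lemma~\ref{F_sigma-c-dense} otherwise, let $\varphi$ map every nonempty relatively open subset of $C_n\setminus C_{n-1}$ onto $[0,1/n]$ --- these are the constructions from the implication $(iii)\Rightarrow(i)$ of Theorem~\ref{theorem_algebrability}. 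A routine verification, identical to those in Theorems~\ref{theorem_algebrability} and \ref{c+lineability}, shows that the continuity set of $\varphi$ equals $G$ and that for every $x\in\R\setminus G$ there is a nondegenerate interval (or all of $\R$) contained in the range of $\varphi$ restricted to every neighbourhood of $x$.

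Next, fix $\{\beta_\xi:\xi<\mathfrak{c}\}\subset\R$ linearly independent over $\Q$ and put $f_\xi=\exp(\beta_\xi\varphi)-1$. Given a nonzero polynomial $P$ in $m$ variables without constant term and $\xi_1<\dots<\xi_m<\mathfrak{c}$, substituting $u_j=\exp(\beta_{\xi_j}s)$ gives
$$
P(f_{\xi_1},\dots,f_{\xi_m})=h\circ\varphi,\qquad h(s)=P\bigl(e^{\beta_{\xi_1}s}-1,\dots,e^{\beta_{\xi_m}s}-1\bigr)=\sum_{\mathbf l}d_{\mathbf l}\,e^{(l_1\beta_{\xi_1}+\dots+l_m\beta_{\xi_m})s},
$$
with $\mathbf l$ ranging over a finite subset of $\N^m$. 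By $\Q$-linear independence of the $\beta_\xi$, distinct $\mathbf l$ give distinct exponents and no $\mathbf l\neq\mathbf 0$ yields the exponent $0$; hence $e^{\beta_{\xi_1}s},\dots,e^{\beta_{\xi_m}s}$ are algebraically independent over $\R$, so $h\not\equiv0$, and $h$ cannot be a nonzero constant (that would force $P$ to be constant, hence zero). Thus $h$ is a nonconstant real-analytic function with $h(0)=0$. In particular $h\circ\varphi\not\equiv0$, so $\{f_\xi:\xi<\mathfrak{c}\}$ generates a free algebra. To see each nonzero $h\circ\varphi$ lies in $\mathcal C_G$: at $x\in G$, $\varphi$ is continuous and $h$ is continuous, so $h\circ\varphi$ is continuous at $x$; at $x\in\R\setminus G$, $\varphi$ maps every neighbourhood of $x$ onto a set containing a fixed nondegenerate interval $J$ on which the nonconstant analytic $h$ takes two distinct values $c_1\neq c_2$, each attained, so $h\circ\varphi$ takes both values $c_1,c_2$ in every neighbourhood of $x$ and is therefore discontinuous at $x$. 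Hence $\mathcal C_G$ is strongly $\mathfrak{c}$-algebrable.

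The main obstacle is the first step: choosing the weights $\lambda_i$ and the $[0,1/n]$-scaling on $\R\setminus U$ so that $\varphi$ is continuous exactly on $G$ --- in particular, so that the $U$-part and the $(\R\setminus U)$-part glue continuously at the points of $G\cap\partial U$ and $\varphi$ decays to $0$ at every accumulation point of $U\setminus G$ lying in $G$. This is essentially the bookkeeping already present in the proof of Theorem~\ref{c+lineability}; the genuinely new ingredient --- and the reason only $\mathfrak{c}$ (rather than $2^{\mathfrak{c}}$) generators are produced --- is the passage through a single skeleton function via the exponential substitution, since the countable discrete ``bad set'' $U\setminus G$ cannot support more oscillatory freedom.
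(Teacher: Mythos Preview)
Your argument is correct and takes a genuinely different route from the paper's.

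The paper constructs the $\mathfrak{c}$ generators directly: on $U$ it uses $\psi_{\alpha,i}(t)=\varepsilon_{\alpha,i}\sin\bigl(r_\alpha/(t-a_i)\bigr)$ with a $\Q$-linearly independent family $\{r_\alpha:\alpha<\mathfrak{c}\}\cup\{\pi\}$ of frequencies, and appeals to the multidimensional Kronecker theorem to guarantee that, near each $a_i$, the vector $\bigl(\psi_{\alpha_1,i}(t),\dots,\psi_{\alpha_k,i}(t)\bigr)$ fills the cube $[-1,1]^k$; hence $P(\varphi_{\alpha_1},\dots,\varphi_{\alpha_k})$ is discontinuous at $a_i$ for every nonzero polynomial $P$. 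On $\R\setminus U$ it reuses the full free algebra built in Theorem~\ref{theorem_algebrability}. Your approach instead builds a \emph{single} skeleton $\varphi\in\mathcal C_G$ with strong oscillation at each point of $\R\setminus G$, and then manufactures the $\mathfrak{c}$ generators by the exponential lift $f_\xi=\exp(\beta_\xi\varphi)-1$; algebraic independence and the preservation of the continuity set both follow from the fact that a nonzero polynomial in the $f_\xi$'s is $h\circ\varphi$ for a nonconstant real-analytic $h$ with $h(0)=0$, together with the identity theorem. Your method is arguably more economical---it avoids Kronecker's theorem and the need to synchronise $\mathfrak{c}$ compatible functions on both $U$ and $\R\setminus U$---and it isolates a reusable principle (one oscillating skeleton plus an exponential substitution suffices). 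The paper's approach, on the other hand, produces generators that are bounded on $U$ and displays the Kronecker phenomenon explicitly, which is closer in spirit to the construction in Theorem~\ref{c+lineability}. One small remark: your parenthetical justification that $A=(\R\setminus G)\setminus U$ is $\mathfrak{c}$-dense in itself is correct, but the cleanest way to see it is via the very definition of $U$: any interval around a point of $A$ meets $\R\setminus G$ in $\mathfrak{c}$ points, and deleting the countable set $U\setminus G$ leaves $\mathfrak{c}$ points in $A$.
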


\begin{proof}
The construction will be very similar to that from the proof of Theorem \ref{c+lineability} and therefore we will omit some details. 

Let $\{a_i:i\in N\}$ be an enumeration of the set $U\setminus G$ and let $\{I_i:i\in N\}$ be a family of pairwise disjoint intervals with $a_i\in I_i$. Let $\{r_\alpha:\alpha<\mathfrak{c}\}\cup\{\pi\}$ be a set of positive real numbers which is linearly independent over $\Q$. 
For any $a_i$ let $\varepsilon_{\alpha,i}$ be a positive real number such that $(a_i-2\varepsilon_{\alpha,i},a_i+2\varepsilon_{\alpha,i})\subset I_i$ and $\sin(r_{\alpha}/\varepsilon_{\alpha,i})=0$. Define $\psi_{\alpha,i}: U\to\R$ in the following way. Put $\psi_{\alpha,i}(t)=\varepsilon_{\alpha,i}\sin(\frac{r_\alpha}{t-a_i})$ if $t\in(a_i-\varepsilon_{\alpha,i},a_i)\cup(a_i,a_i+\varepsilon_{\alpha,i})$, and $\psi_{\alpha,i}(t)=0$ otherwise. Define $\varphi_{\alpha}':U\to\R$ by the formula
$$
\varphi_{\alpha}'(t)=\sum_{i\in N}\psi_{\alpha,i}(t).
$$ 
Note that $\varphi_\alpha'$ is well-defined since $\psi_{\alpha,i}$, $i\in N$, have distinct supports. 
By $\varphi_{\alpha}'':\R\setminus U\to\R$ denote algebraically independent functions such that a linear algebra generated by $\{\varphi_\alpha'':\alpha<\mathfrak{c}\}$ consists of functions $f:\R\to\R$ which set of continuity points equal $G\cup U$ and $f\vert_{G\cup U}=0$. Finally we define $\varphi_\alpha:\R\to\R$ in the following way. Put $\varphi_\alpha(t)=\varphi_{\alpha}'(t)$ if $t\in U$ and $\varphi_\alpha(t)=\varphi_\alpha ''(t)$ if $t\in\R\setminus U$. We will prove that the family of functions $\{\varphi_\alpha:\alpha<\mathfrak{c}\}$ is algebraically independent and any nonzero algebraic combination of its elements is in $\mathcal{C}_G$.  

Let $P$ be a nonzero polynomial in $k$ variables and let $\alpha_1<\dots<\alpha_k<\mathfrak{c}$. Consider the function $\varphi=P(\varphi_{\alpha_1},\dots\varphi_{\alpha_k})$. Since $\{(\psi_{\alpha_1,i}(t),\dots,\psi_{\alpha_k,i}(t)):t\in (a_i-\delta,a_i+\delta)\}=[-1,1]^k$ for any $\delta>0$. Therefore the range of the function $t\mapsto P(\psi_{\alpha_1,i},\dots,\psi_{\alpha_k,i})(t)$ restricted to any neighborhood of $a_i$ equals the range of the function $[-1,1]^k\ni (t_1,\dots,t_k)\mapsto P(t_1,\dots,t_k)$. Thus $t\mapsto P(\psi_{\alpha_1,i},\dots,\psi_{\alpha_k,i})(t)$ is discontinuous at $a_i$. Note that $P(\varphi_{\alpha_1},\dots,\varphi_{\alpha_k})=P(\psi_{\alpha_1,i},\dots,\psi_{\alpha_k,i})$ on $I_i$.

The rest of the proof that $\varphi\in\mathcal{C}_G$ is the same as that of Theorem \ref{c+lineability}.
\end{proof}

The most common example of a function $f$ whose set of continuity points equals $\R\setminus A$ with countable $A=\{a_i:i=1,2,3,\dots\}$ is the following
$$
f(t)=\sum_{a_i<t}\frac{1}{2^i}. 
$$
The function $f$ has a jump discontinuity at each $a_i$. The following proposition shows why we could not use such functions instead of $\psi_{\alpha,i}$ in the proof of Theorem \ref{c+lineability}. 
\begin{proposition}
Let $E\subset\R^\R$ stand for the set of all functions $f$ from $\R^\R$ such that $\lim_{x\to 0^-}f(x)$ and $\lim_{x\to 0^+}f(x)$ exist but $f$ is not continuous at 0. Then $E$ is not 4-lineable.  
\end{proposition}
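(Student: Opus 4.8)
The plan is to exploit the one-sided limits to reduce the problem to a statement about finitely many real numbers. For $f \in E$, write $f(0^-) = \lim_{x\to 0^-} f(x)$, $f(0^+) = \lim_{x\to 0^+} f(x)$, and $f(0)$ itself; these are three real numbers, and the failure of continuity at $0$ means that the triple $(f(0^-), f(0), f(0^+))$ is not "constant", i.e.\ at least one of the two equalities $f(0^-) = f(0)$, $f(0^+) = f(0)$ fails. First I would observe that $E$ together with $0$ is \emph{not} a linear subspace of $\R^\R$ — indeed if $f(0^-) \ne f(0) = f(0^+)$ and $g(0^-) = g(0) = g(0^+)$ but $g(0^+) \ne g(0^-)$... wait, that cannot happen. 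The point is rather: the map $T \colon E \cup \{0\} \to \R^3$ sending $f \mapsto (f(0^-) - f(0),\ f(0^+) - f(0))$ is well-defined on any linear subspace $M$ of $\R^\R$ all of whose nonzero elements lie in $E$, is linear on such $M$, and is injective on $M$ in the sense that its kernel is $\{0\}$ (a nonzero $h \in M$ with $T(h) = (0,0)$ would be continuous at $0$, contradicting $h \in E$). Hence $\dim M = \dim T(M) \le 2$. That already bounds the lineability by $2$, which is stronger than the claimed "not $4$-lineable"; but one must be slightly careful, because $T$ is only additive/homogeneous \emph{on} a subspace $M \subseteq E \cup \{0\}$, not on all of $\R^\R$, since $E \cup \{0\}$ is not closed under addition in general.

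So the key steps, in order, are: (1) fix a linear subspace $M \subseteq \R^\R$ with $M \setminus \{0\} \subseteq E$; (2) for $f \in M$, note that each of $f(0^-)$, $f(0^+)$, $f(0)$ is defined (for $f = 0$ trivially, for $f \ne 0$ by membership in $E$), and that the assignments $f \mapsto f(0^-)$, $f \mapsto f(0^+)$, $f \mapsto f(0)$ are linear functionals on $M$ — here additivity uses that a sum of two elements of $M$ is again in $M$, hence again has one-sided limits at $0$, and the limit of a sum is the sum of the limits; (3) define the linear map $T \colon M \to \R^2$ by $T(f) = (f(0^-) - f(0),\ f(0^+) - f(0))$; (4) show $\ker T = \{0\}$: if $T(f) = 0$ then $f(0^-) = f(0) = f(0^+)$, so $f$ is continuous at $0$, so $f \notin E$, forcing $f = 0$; (5) conclude $\dim M = \dim_{\R} T(M) \le \dim_{\R} \R^2 = 2 < 4$, so $E$ is not $4$-lineable (indeed not even $3$-lineable).

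The only real subtlety — the step I would flag as the place to be careful rather than the "main obstacle", since the argument is genuinely short — is point (2): one must check that for $f, g \in M$ the function $f + g$, which lies in $M \subseteq E \cup \{0\}$, really does have both one-sided limits at $0$, so that the functionals $f \mapsto f(0^\pm)$ are defined on all of $M$ and behave additively. This is exactly where working inside a subspace $M$ (rather than trying to define $T$ on $E \cup \{0\}$ itself) is essential: on $E \cup \{0\}$ the assignment $f \mapsto f(0^-)$ need not be additive because $f + g$ could fail to have a one-sided limit, but by hypothesis every element of $M$ lies in $E \cup \{0\}$ and hence automatically has the required one-sided limits. Once this is in place, linearity of the limit operations and the rank-nullity observation finish the proof immediately; I would present it as a two-paragraph argument with no computation.
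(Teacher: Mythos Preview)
Your argument is correct and follows the same idea as the paper's: both proofs associate to each $f$ in a putative subspace $M\subset E\cup\{0\}$ its finite-dimensional ``limit data'' at $0$ and observe that the resulting linear map has trivial kernel. The paper maps $f\mapsto\bigl(f(0^-),f(0),f(0^+)\bigr)\in\R^3$ and concludes $\dim M\le 3$, whereas your map $f\mapsto\bigl(f(0^-)-f(0),\,f(0^+)-f(0)\bigr)\in\R^2$ yields the sharper (and optimal) bound $\dim M\le 2$; otherwise the two arguments are identical.
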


\begin{proof}
Suppose that $E\cup 0$ contains a $4$-dimensional linear subspace spanned by $f_1,\dots,f_4$. Since the set $\{(\lim_{x\to 0^-}f_i(x),f_i(0),\lim_{x\to 0^+}f_i(x)):i=1,2,3,4\}$ is not linearly independent, there are nonzero $c_1,\dots,c_4$ such that $f=c_1f_1+\dots+c_4f_4$ is continuous at $0$ which yields a contradiction.  
\end{proof}

We are interested in the problem whether $\mathcal C_\Q$ is strongly $\mathfrak{c}$-algebrable. We believe that the answer would help to solve in general the problem of the maximal strong algebrability of $\mathcal C_G$, for arbitrary $G_\delta$ set $G$. Finally let us remark that in \cite{GKP} it was proved that the set of all functions from $\R^\R$ with dense set of jump discontinuities is strongly $\mathfrak{c}$-algebrable.

%%%%%%%%%%%%%%%%%%%%%%%%%%%%%%%%%%%%%%%%%%%%%%%%%%%%%%%%%%%%%%%%%%%%%%%%%%%%%
%
% Funkcje Darboux przekszta³caj¹ce nieprzeliczalne zbiory zwarte na zbiory 
% zwarte
%%%%%%%%%%%%%%%%%%%%%%%%%%%%%%%%%%%%%%%%%%%%%%%%%%%%%%%%%%%%%%%%%%%%%%%%%%%%%

\subsection{Compact preserving Darboux mapping.} 

It is known that a function $f:\R\to\R$ is continuous if and only if for each compact subset $K\subset \R$ the image $f(K)$ is compact and $f$ is Darboux, i.e. for each connected subset $C\subset\R$ the image $f(C)$ is connected. This result was proved by several authors. Recently G\'amez-Merino, Mu\~noz-Fern\'andez and Seoane-Sep\'ulveda \cite{GMMFSS} proved that the family of Darboux nowhere continuous functions  and the family of compact preserving nowhere continuous functions are $2^\mathfrak{c}$-lineable. This results were improved by Bartoszewicz, Bienias and G\l\c{a}b to $2^\mathfrak{c}$-algebrability \cite{BBG}. Note that Theorem \ref{Th2}(iii) says that the family of Darboux nowhere continuous functions is strongly  $2^\mathfrak{c}$-algebrable. The next result deals with the algebrability of compact preserving mapping which are nowhere continuous. 
We will show that also the family of all nowhere continuous functions which map uncountable compact sets to compact sets is also strongly  $2^\mathfrak{c}$-algebrable. However, the family of all nowhere continuous compact preserving functions is $2^\mathfrak{c}$-algebrable but not strongly  1-algebrable. 

\begin{theorem}\label{ThCompact}
\begin{itemize}
\item[(i)] The family of all nowhere continuous Darboux functions which maps uncountable compact sets to compact sets is strongly  $2^\mathfrak{c}$-algebrable;
\item[(ii)] the family of all nowhere continuous compact--preserving functions is $2^\mathfrak{c}$-algebrable but not strongly  1-algebrable.
\end{itemize}
\end{theorem}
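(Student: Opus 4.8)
For part (i), the plan is to reuse the mechanism behind Theorem~\ref{Th2}(iii): take $\mathcal F$ to be the collection of all nonempty perfect subsets of $\R$ (equivalently, by co-initiality, all uncountable compact, indeed all uncountable Borel sets), which has cardinality $\mathfrak c$ with every member of cardinality $\mathfrak c$. Apply Theorem~\ref{main1} with $X=\R$, $\kappa=\mathfrak c$ (so $\kappa^\omega=\kappa$) and $I=[0,1]$ to obtain a free linear algebra $\mathcal A\subset\mathcal{SES}([0,1],\mathcal F)\cup\{0\}$ of $2^{\mathfrak c}$ generators. Every nonzero $f\in\mathcal A$ has the form $f=P(f_{\xi_1},\dots,f_{\xi_k})$ and satisfies $f(F)=P([0,1]^k)$ for every perfect $F$; since $P$ without constant term is continuous, $P([0,1]^k)$ is a compact interval (or a compact subset of $\R$), so $f$ maps every uncountable compact set (which contains a perfect set, whose image is all of $P([0,1]^k)$, and is itself mapped into the compact set $P(\text{range})$, hence into a bounded set) onto a compact set; one checks using the description of $\mathcal{SES}([0,1],\mathcal F)$ given in Section~2 that such $f$ is also Darboux and nowhere continuous. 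The only subtlety is verifying that the image of an uncountable compact set is actually compact and not merely bounded: since $f$ restricted to any perfect subset is onto $P([0,1]^k)$ and the whole image of $\R$ lies in $P([0,1]^k)$ (because every point of $\R$ sits in some $Q_\xi$ and $f|_{Q_\xi}$ has range contained in $P(I^k)$), the image of every set of reals equals exactly the compact set $P([0,1]^k)$; this handles the ``compact-preserving on uncountable compacta'' requirement at once.

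For part (ii), the positive direction ($2^{\mathfrak c}$-algebrability) is already due to \cite{BBG}, so it suffices to recall it; alternatively one can reprove it by the same refinement technique using a partition of $\R$ into $\mathfrak c$-many dense Bernstein-type pieces and assigning on each piece a function whose range is exactly a fixed compact set, then taking algebras of such functions. The genuinely new content is the negative statement: the family of all nowhere continuous compact-preserving functions is \emph{not} strongly $1$-algebrable. The plan here is a direct argument: suppose $f$ is a free generator of a $1$-generated free subalgebra contained in this family. Then all powers $f,f^2,f^3,\dots$ together with their linear combinations $P(f)$ (for $P$ a nonzero polynomial without constant term) are nowhere continuous and compact-preserving; in particular $f$ is bounded on every singleton set, but more importantly compact-preservation of $f$ on the compact set $\{0\}\cup\{1/n:n\in\N\}$ forces $f(\{0\}\cup\{1/n:n\})$ to be compact. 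The key observation is that if $g:\R\to\R$ is compact-preserving then $g$ is automatically bounded on every bounded set (map a compact interval), hence $f(\R\cap[-M,M])$ is bounded for each $M$; but then for a suitable polynomial $P$, the function $P\circ f$ can be arranged to hit a situation contradicting nowhere-continuity --- more precisely, I would argue that a compact-preserving $f$ has range on each compact interval equal to a compact set, and one shows that $f^2-c f$ (for $c$ a value in the range of $f$ on some tiny interval) is continuous at a point, contradicting nowhere continuity; the finiteness of the polynomial degree is what makes this possible, exactly as in the $c_{00}\subset c_0$ example cited in the introduction.

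The main obstacle I anticipate is precisely the non-strong-$1$-algebrability claim in (ii): one must pin down the exact structural constraint that ``compact-preserving'' imposes on a single function and then show it is incompatible with all of $f,f^2,\dots$ being nowhere continuous. The cleanest route is probably: (a) show a compact-preserving $f$ maps every compact $K$ to a compact set, so in particular $f$ has a well-defined finite supremum on each such $K$; (b) choose a convergent sequence $x_n\to x$ and pass to the compact set $K=\{x\}\cup\{x_n\}$, so $f(K)$ is compact, giving a subsequence along which $f(x_{n_k})\to$ some limit $\ell$; (c) exploit that for a \emph{free} generator the map $t\mapsto (t,t^2,\dots,t^m)$ composed with $f$ must have image whose closure contains points forcing $P\circ f$ continuous at $x$ for some nonzero $P$ without constant term --- concretely, if $f(x_{n_k})\to\ell$ and $f(x)=\ell$ then $P\circ f$ is continuous at $x$ along that subsequence for \emph{every} $P$, and one arranges a genuine two-sided contradiction by also controlling a second sequence; if instead $f(x)\neq\ell$, then $f$ fails to attain a limit but compactness of $f(K)$ still bounds things, and a polynomial identity $P(f(x))=P(\ell)$ can be forced by choosing $P$ with $P(f(x))=P(\ell)$, killing nowhere-continuity of $P\circ f$ at $x$. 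Making this last step fully rigorous --- ruling out \emph{every} free generator, not just nice ones --- is where care is needed, and it mirrors closely the standard proof that $c_{00}$ is not strongly $1$-algebrable in $c_0$.
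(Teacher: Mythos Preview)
Your treatment of part (i) is correct and essentially identical to the paper's: take $\mathcal F$ to be the family of nonempty perfect subsets of $\R$, apply Theorem~\ref{main1} with $I=[0,1]$, and observe (as the paper remarks just before Theorem~\ref{main1}) that every nonzero element of $\mathcal{SES}([0,1],\mathcal F)$ is Darboux, nowhere continuous, and maps uncountable compact sets onto compact sets. Your extra remark that $f(K)=P([0,1]^k)$ for every uncountable compact $K$ (since $f(\R)\subset P([0,1]^k)$ and $K$ contains a perfect set) is exactly the verification the paper leaves implicit. For the positive half of (ii) you correctly cite \cite{BBG}, as the paper does.

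The genuine gap is in the negative half of (ii). Your sequence-and-subsequence sketch does not close. From a convergent sequence $x_n\to x$ you obtain a subsequence with $f(x_{n_k})\to\ell$, and you propose choosing a nonzero polynomial $P$ without constant term with $P(f(x))=P(\ell)$; but this yields only $P(f(x_{n_k}))\to P(f(x))$ along \emph{one} subsequence, not continuity of $P\circ f$ at $x$. Other sequences $y_m\to x$ may produce other cluster values $\ell'$, and there is no single polynomial that simultaneously identifies $f(x)$ with all of them. The ad hoc suggestion $f^2-cf$ has the same defect: it vanishes at points where $f\in\{0,c\}$, but nothing prevents $f$ from oscillating through many other values near any given point, so $f^2-cf$ need not be continuous anywhere.

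The missing structural ingredient, which the paper imports from \cite{BBBG}, is that a compact-preserving nowhere continuous function must have \emph{finite range on some interval}: there exist an interval $I$ and a finite set $\{a_1,\dots,a_n\}$ with $f(I)=\{a_1,\dots,a_n\}$. Once this is known, a clean linear-algebra step finishes the proof: the $n+1$ vectors $(a_1^j,\dots,a_n^j)$, $j=1,\dots,n+1$, lie in $\R^n$ and are therefore linearly dependent, so some nontrivial combination $\alpha_1 f+\dots+\alpha_{n+1}f^{n+1}$ vanishes identically on $I$ and hence is continuous there, contradicting membership in the family. Your proposal never isolates this finiteness of the local range, and without it the polynomial-annihilation trick cannot be made to work.
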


\begin{proof}
Let $\mathcal F$ be the family of nonempty perfect subsets of $\R$. The family $\mathcal{SES}([0,1],\mathcal F)$ consists of functions $f$ such that $f$ maps nonempty pefect sets onto compact intervals, $f$ is Darboux nowhere continuous. By Theorem \ref{main1} the family $\mathcal{SES}([0,1],\mathcal F)$ is strongly $2^\mathfrak{c}$-algebrable which proves (i).

The $2^\mathfrak{c}$-algebrability of the family of all nowhere continuous compact--preserving functions was proved in \cite{BBG}. Suppose that $f$ is nowhere continuous compact--preserving functions. Banakh, Bartoszewicz, Bienias and G\l\c{a}b have recently proved that compact--preserving nowhere continuous function cannot take infinitely many values on every interval \cite{BBBG}. Thus there is an interval $I$ such that $f(I)=\{a_{1},...,a_{n}\}$. The set \[\{(a_{1},...,a_{n}),(a^{2}_{1},...,a^{2}_{n}),...,(a^{n+1}_{1},...,a^{n+1}_{n})\}\] is not linearly independent in $\R^{n}$. Hence there are numbers  $\alpha_{1},...,\alpha_{n+1} \in \R$ not vanishing simultaneously and such that $\alpha_{1} a_{k} + ... + \alpha_{n+1} a^{n+1}_{k} =0$ for every $k \in \{1,...,n\}$. Therefore $\alpha_{1} f + ... +\alpha_{n+1} f^{n+1}$ is the zero function on $I$, and therefore continuous on $I$. Hence the family of nowhere continuous compact--preserving functions is not strongly 1-algebrable which proves (ii).
\end{proof}

As we mentioned, compact--preserving Darboux functions are necessarily continuous. As it was remarked in \cite{BBBG} it is enough to assume that Darboux function  $f$ maps countable compact sets onto compact sets to get the continuity of $f$. Theorem \ref{ThCompact}(i) shows that preserving of compactness of uncountable sets and the Darboux property are not sufficient for continuity.

%%%%%%%%%%%%%%%%%%%%%%%%%%%%%%%%%%%%%%%%%%%%%%%%%%%%%%%%%%%%%%%%%%%%%%%%%%%%%%
%%
%% Bibliografia
%%
%%%%%%%%%%%%%%%%%%%%%%%%%%%%%%%%%%%%%%%%%%%%%%%%%%%%%%%%%%%%%%%%%%%%%%%%%%%%%%

\end{document}